\numberwithin{equation}{section}
\newtheorem{lemma}{Lemma}[section]
\newtheorem{proposition}[lemma]{Proposition}
\newtheorem{theorem}[lemma]{Theorem}
\newtheorem{corollary}[lemma]{Corollary}
\newtheorem{rem}[lemma]{Remark}
\newtheorem*{special theorem}{My Specially-Named Theorem}
\newcommand{\comment}[1]{}
\begin{document}

% this starts the page-numbering at the bottom-center with roman numerals,
% like you're supposed to for the beginning pages

\pagestyle{plain}

% DOCUMENT INFO

\title{On cohomologically Trivially modules over finite $p$-groups }

\author{Yassine Guerboussa$^{\dag}$ and Maria Guedri$^{*}$}

\address[$^{\dag}$]{Department of Computer Sciences, University Kasdi Merbah Ouargla, Ouargla, Algeria \\ {\tt Email: yassine\_guer@hotmail.fr}}

\address[$^{*}$]{Department of Mathematics, Fr\`{e}res Mentouri University-Constantine 1, Ain El Bey road, 25017 Constantine. Algeria
	\\ {\tt Email: guedri\_maria@yahoo.fr}}

\date{\today}

% TITLE PAGE
\maketitle

\begin{abstract} We show that every finitely generated cohomologically trivial module over $RG$, where $G$ is a finite $p$-group and $R$ is a $p$-adic ring, splits as the direct sum of a finite cohomologically trivial $RG$-module and a free $RG$-module.  Along the way, we also establish other results concerning generators and relators of such modules.

\end{abstract}\vspace{1cm}

\footnotesize
Keywords: finite $p$-groups, group cohomology, zeta functions.

%MSC:  20J06, 20D15

\normalsize

 \section{Introduction}
The purpose of this paper  is to note  some general properties  of \textit{cohomologically trivial }(\textit{CT} for brevity) modules over finite $p$-groups.  The interest in such modules---though  originated in class field theory (see e.g., Nakayama \cite{Nakayama1})---has been primarily motivated in recent years by questions regarding finite $p$-groups and their automorphisms; see \cite{AliPowerfulCT, AliCT, Benmoussa, AMY} and  \cite{Mexican,YasMar1}.

Throughout we let $p$ be a (rational) prime,  $G$ be  a finite $p$-group, and $R$ be  the ring of integers of a $p$-adic field $K$ (i.e., $K$ is a finite extension of the field  of $p$-adic numbers $\mathbb{Q}_p$).  For every $RG$-module $A$ we can consider the torsion $R$-submodule  
$$T(A)=\{a\in A\mid \pi^la=0 \mbox{ for some integer } l \geq 0 \},$$
where $\pi$ is a uniformizing element of $R$.  It is readily seen that
$T=T(A)$ is invariant under the action of $G$,  that is, $A$ is an $RG$-submodule  of $A$. 

If $A$ is finitely generated over $RG$ (or equivalently over $R$), then, since $R$ is a principal ideal domain, 
 $T=T(A)$ can be written uniquely as a direct sum of finitely many $R$-modules of the form $R/(\pi^{n_i})$, with $n_i\in \mathbb{N}$; in particular,   $T$ is finite  of  $p$-power  order (we refer to \cite[Theorem 1.2]{BG} for the  relationship between the invariants $n_i$ and the invariants of $T$ as an abelian $p$-groups).   Moreover,
 $A/T$ is  free over $R$ and we have  
$$A=T\oplus A/T \quad (\mbox{over } R).$$

One of our aims here is show that the above decomposition holds in fact over $RG$  provided $A$ is CT.   Before proceeding further, we recall that
a module $A$ over  $G$ (that is, $A$ is a $\mathbb{Z}G$-module)  is called \textit{CT} if the  (Tate)  cohomology groups  $\hat{H}^n(S,A)$ are zero for all subgroups $S\leq G$ and all integers $n$.   

\begin{theorem}\label{Theorem A}
	Let $A$ be a  finitely generated $RG$-module. If $A$ is CT, then so are  $T$ and $A/T$; moreover, $A/T$ is free over $RG$  and  we have
	$$A= T\oplus A/T \quad (\mbox{over } RG).$$
\end{theorem}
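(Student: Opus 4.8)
The plan is to reduce the entire statement to the single assertion that $A/T$ is free over $RG$, and then to establish that. The engine is the following characterisation, which I would prove first: \emph{a finitely generated $RG$-module $M$ is CT if and only if it admits a free resolution of length one}, i.e.\ a short exact sequence $0\to N\to F\to M\to 0$ with $N,F$ finitely generated free $RG$-modules. For the substantive direction, choose a finite free cover $\phi\colon F\twoheadrightarrow M$ and set $N=\ker\phi$; since $F$ is $R$-free and $R$ is a principal ideal domain, $N$ is $R$-free, and the Tate cohomology long exact sequence of $0\to N\to F\to M\to 0$ — using that free $RG$-modules are CT over every subgroup (Shapiro) — shows $N$ is CT. A finitely generated $R$-free CT module over $RG$ is $RG$-projective by the classical criterion of Nakayama and Rim, and here projective means free because $RG$ is local ($R$ is local of residue characteristic $p$ and $G$ is a $p$-group). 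The converse is immediate: restricting a length-one free resolution to any subgroup $S$ keeps the terms free, hence CT, so the long exact sequence annihilates $\hat{H}^{*}(S,M)$.

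Granting this, suppose we have shown $A/T$ is $RG$-free. Then it is projective, so the tautological sequence $0\to T\to A\to A/T\to 0$ splits over $RG$, giving $A=T\oplus A/T$; hence $T$, a direct summand of the CT module $A$, is CT, and $A/T$ is CT because it is free. Conversely, since $A/T$ is an $RG$-lattice, the characterisation above turns ``$A/T$ free'' into ``$A/T$ CT'', and the long exact sequence of $0\to T\to A\to A/T\to 0$ (with $A$ CT) makes this equivalent to ``$T$ CT''. So the whole theorem comes down to proving that the finite module $T=T(A)$ is cohomologically trivial.

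To attack this I would fix a length-one free resolution $0\to N\xrightarrow{\iota} F\xrightarrow{\phi} A\to 0$, put $F'=\phi^{-1}(T)$, and work with the two induced sequences $0\to N\to F'\to T\to 0$ and $0\to F'\to F\to A/T\to 0$. One checks that $F'$ is $R$-saturated in $F$ (because $F/F'\cong A/T$ is $R$-free), so $F'$ is an $RG$-lattice of the same $R$-rank as $N$; also $A\otimes_R K$ is free over $KG$, since tensoring $0\to N\to F\to A\to 0$ with the fraction field $K$ and invoking semisimplicity of $KG$ splits $N\otimes_R K$ off as a free summand of $F\otimes_R K$. The target is to prove $F'$ is $RG$-free (equivalently CT), which through the first sequence forces $T$ to be CT. For this I would try to feed in the generator/relator information developed earlier in the paper and then reduce the cohomological triviality of $F'$ to its restrictions to the cyclic subgroups of order $p$ — where it suffices to check, and where the classification of $RC$-lattices is completely explicit. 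I expect this last step, i.e.\ detaching the torsion submodule of a module of projective dimension one from its free part, to be the main obstacle; everything else is formal.
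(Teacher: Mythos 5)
Your preliminary reductions are fine and essentially duplicate what the paper does: the equivalence ``$A/T$ free $\Rightarrow$ splitting $\Rightarrow$ $T$ CT'', and the fact that a finitely generated $R$-free CT module is $RG$-free (Nakayama--Rim plus locality of $RG$), are exactly Lemma \ref{Enough to get T CT} and Proposition \ref{Presentations of CT modules are free}. But you then stop at the point where the theorem actually begins: the claim that $T$ (equivalently $A/T$, equivalently your $F'$) is CT is never proved, and you say yourself that you expect it to be ``the main obstacle''. That obstacle is the entire content of Theorem \ref{Theorem A}; without it the proposal is a reformulation of the statement, not a proof.

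Moreover, the route you sketch for that missing step is not viable as stated. Cohomological triviality of a lattice over $RG$ is \emph{not} detected on the cyclic subgroups of order $p$: already for $G=(\Z/2)^2$ and a residue field containing $\mathbb{F}_4$, the two-dimensional $\overline{K}G$-module on which the two generators act by unipotent Jordan blocks twisted by a scalar $\lambda\notin\mathbb{F}_2$ is free (hence CT) over every subgroup of order $2$, yet is not free over $\overline{K}G$ and so not CT over $G$ (Lemma \ref{Modules annihilated by pi are free over residual fields}); detection of projectivity requires elementary abelian subgroups (Chouinard/Dade), not order-$p$ ones. The paper's reduction to $|G|=p$ is of a different nature: an induction on $|G|$ through a central subgroup $N$ of order $p$, using the inflation--restriction sequence (Lemma \ref{Serre's on inflation map}) and, crucially, the Gasch\"utz--Uchida theorem (Theorem \ref{Gaschutz-Uchida}), which applies because $T$ is \emph{finite} and so a single vanishing $\hat H^1(G,T)$ suffices. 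And in the base case $|G|=p$ the non-formal input is not the classification of $RC_p$-lattices but the statement that if $\mathrm{Hom}_{\overline{K}}(V,V)\cong V^*\otimes_{\overline{K}}V$ is free over $\overline{K}G$ then $V$ is free (Lemma \ref{Modules over F_p[X]}), fed by the observation that $\mathrm{Hom}_R(B,B)$, $B=A/T$, is CT. Nothing in your outline supplies a substitute for either of these two ingredients, so the proposal has a genuine gap at its core.
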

The assumption that $A$ is CT  in this theorem is essential. Indeed, take for example   $A=RG/\mathfrak{I}^n$, where  $\mathfrak{I}$ is the augmentation ideal and $n$ is an integer $\geq 2$.  As is well known, $\mathfrak{I}/\mathfrak{I}^2\cong G^{ab}$ is finite; by using the obvious epimorphism $\mathfrak{I}/\mathfrak{I}^2\otimes_R\mathfrak{I}^{j-1}/\mathfrak{I}^{j}\to \mathfrak{I}^{j}/\mathfrak{I}^{j+1}$, we see via induction on $j$ that $\mathfrak{I}^{j}/\mathfrak{I}^{j+1}$ is finite for all integers $j\geq 1$; in particular $\mathfrak{I}/\mathfrak{I}^n$ is finite.  Also, by observing that $RG/\mathfrak{I}\cong R$, we see that the torsion part of $A$ is $T=\mathfrak{I}/\mathfrak{I}^n$. But, $A$ as an $RG$-module doesn't split over $T$ (observe that $A$ has rank $1$; see below). Note that $\hat{H}^0(G,A)= \mathfrak{I}^{n-1}/(R\cdot  N+\mathfrak{I}^{n})$ is non-zero, where $N$ denotes the norm $\sum_{x\in G}x$ (observe here  that $\mathfrak{I}\cap R\cdot N=0$).

Theorem \ref{Theorem A} shows that the intriguing part of the category of  finitely generated CT $RG$-modules  lies in the subcategory of those which are finite;  of particular interest are the latter which are  indecomposable  (i.e., the building blocks of the former according to the Krull-Schmidt theorem).  A deeper knowledge of the latter will be of great importance  for understanding the counter-examples to Schmid's conjecture (\cite[p. 3]{Schmid}): \textit{For every non-abelian $p$-group $G$, $Z(\Phi(G))$ is not CT over
$G/\Phi(G)$, where  $\Phi(G)$  denotes the Frattini subgroup of $G$.}  It is worth noting that, so far, the only known counter-examples are the $2$-groups of order $2^8$ with  {\sf IdSmallGroup} $298,\dots,307$  in  GAP Library \cite{GAP}, which were found by Abdollahi  \cite{AliCT} (see also \cite{AMY} and  \cite{YasMar1}).   Note that Schmid's conjecture is closely related to the  longstanding problem  of whether 
every non-abelian finite $p$-group $G$ has a non-inner automorphism of order $p$; we refer to \cite{AliPowerfulCT, Benmoussa} for  details and to the more recent papers \cite{kom2024, MaNoninnerConjecture} for an updated bibliography.

Let  $\overline{K}$ denote the residual field $R/(\pi)$.   To every  finitely generated module $RG$-module $A$ we may attach the following two invariants:
\begin{equation}\label{Basic ranks over Z_p}
d_{R}(A)=\dim_{\overline{K}} A/\pi A   \quad \mbox{and } \quad  r_{R}(A)=\dim_{\overline{K}} A/(\pi A+[A,G]).
\end{equation}
Note that $d_{R}(A)$ is equal to the number of cyclic factors in the canonical decomposition of $A$ as an $R$-module (equivalently, $d_{R}(A)$ is equal to the size of an arbitrary generating set of $A$ over $R$; see Lemma \ref{Lemma elementary rank} below).     Similarly,   $r_{R}(A)$ is equal to the size of any minimal generating set of $A$ over $RG$ (this  depends heavily on the fact that $RG$ is a local ring; see Lemmas \ref{The base ring is local} and \ref{Lemma elementary rank}).  Accordingly, we may call $r_{R}(A)$  the \textit{$RG$-rank}  of $A$. 

If we are given   a short exact sequence of  $RG$-modules $M\rightarrowtail L\twoheadrightarrow A$,  with $L$ free of finite rank, then it is readily seen that
\begin{equation}
d_{R}(M)=d_{R}(L)-d_{K}(A),
\end{equation}
where $d_K(A)=\dim_K A\otimes_R K$ (which we may call the \textit{torsion-free} rank of $A$).   For the $RG$-rank of $M$ we have the following.
\begin{theorem}\label{Number of relations for the ZpG-module A}  Let $A$ be a finitely generated $RG$-module, and let  $M\rightarrowtail L\twoheadrightarrow A$ be an exact sequence 
of $RG$-modules,  with $L$ free of finite rank.	Then we have
	$$r_{R}(M)=r_{R}(L)-d_K(A_G)+ d_{R}(H_1(G,A)),$$
	where   $A_G=A/[A,G]$.	
\end{theorem}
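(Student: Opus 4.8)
The strategy is to apply the coinvariants functor $N\mapsto N_G=N/[N,G]$ to the exact sequence $M\rightarrowtail L\twoheadrightarrow A$, recalling that its left derived functors are the homology groups $H_\ast(G,-)$. Two preliminary remarks turn the statement into a purely homological one. First, for any $RG$-module $N$ one has a canonical isomorphism $N_G/\pi N_G\cong N/(\pi N+[N,G])$ (the image of $\pi N$ in $N_G$ is $\pi N_G$), so that $r_R(N)=d_R(N_G)$. Second, writing $L\cong (RG)^{r_R(L)}$ by Lemma \ref{Lemma elementary rank} and using $(RG)_G\cong R$, we see that $L_G$ is free over $R$ of rank $r_R(L)$; in particular $d_R(L_G)=d_K(L_G)=r_R(L)$. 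Thus it suffices to establish $d_R(M_G)=r_R(L)-d_K(A_G)+d_R(H_1(G,A))$.

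Now the long exact homology sequence of $M\rightarrowtail L\twoheadrightarrow A$ reads
$$\cdots\to H_1(G,L)\to H_1(G,A)\to M_G\to L_G\to A_G\to 0,$$
and since $L$ is $RG$-free we have $H_n(G,L)=0$ for all $n\geq 1$; this is exactly where freeness of $L$ enters, and it is what forces $H_1(G,A)$ (rather than some cokernel) to appear as the left-hand term. We are left with the four-term exact sequence of finitely generated $R$-modules
$$0\to H_1(G,A)\to M_G\to L_G\to A_G\to 0$$
(here $H_1(G,A)$ is in fact finite, being finitely generated and $R$-torsion). Put $X=\operatorname{im}(M_G\to L_G)=\ker(L_G\to A_G)$, so the sequence breaks into two short exact sequences $H_1(G,A)\rightarrowtail M_G\twoheadrightarrow X$ and $X\rightarrowtail L_G\twoheadrightarrow A_G$. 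Since $X$ is a submodule of the free module $L_G$ over the discrete valuation ring $R$, it is itself free over $R$.

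The computation is then immediate. Freeness (hence projectivity) of $X$ makes the first short exact sequence split, so $M_G\cong H_1(G,A)\oplus X$ and $d_R(M_G)=d_R(H_1(G,A))+d_R(X)$. For $d_R(X)$: again because $X$ is $R$-free, $d_R(X)=\dim_K(X\otimes_R K)$, and tensoring the second short exact sequence with $K$ over $R$ (exact, as $K$ is flat over $R$) gives $\dim_K(X\otimes_R K)=\dim_K(L_G\otimes_R K)-\dim_K(A_G\otimes_R K)=r_R(L)-d_K(A_G)$. Combining the two identities yields $d_R(M_G)=r_R(L)-d_K(A_G)+d_R(H_1(G,A))$, which together with $r_R(M)=d_R(M_G)$ is the assertion. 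The only genuinely delicate point is the well-known failure of $d_R$ to be additive on short exact sequences; it is circumvented here by arranging that the relevant sub- and quotient modules are $R$-free, so that one of the two sequences actually splits and the other becomes additive after $-\otimes_R K$ — everything else is the standard long exact sequence of group homology together with the structure theory of finitely generated modules over $R$.
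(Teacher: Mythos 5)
Your proof is correct, and its skeleton coincides with the paper's: both pass to the four-term homology sequence $0\to H_1(G,A)\to M_G\to L_G\to A_G\to 0$ (using $H_1(G,L)=0$ for $L$ free), break it at $X=\ker(L_G\to A_G)$, observe that $X$ is $R$-free, and conclude $M_G\cong H_1(G,A)\oplus X$, hence $r_R(M)=d_R(H_1(G,A))+d_R(X)$. Where you diverge is in evaluating $d_R(X)$: the paper applies the ker--coker lemma to multiplication by $\pi$ on $X\rightarrowtail L_G\twoheadrightarrow A_G$, obtains $d_R(X)=\dim_{\overline{K}}({}_{\pi}(A_G))+d_R(L_G)-d_R(A_G)$, and then uses the identity $d_R(A_G)-\dim_{\overline{K}}({}_{\pi}(A_G))=d_K(A_G)$; you instead tensor that short exact sequence with the flat $R$-module $K$ and count $K$-dimensions, getting $d_R(X)=r_R(L)-d_K(A_G)$ in one step. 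Your route is a bit more economical (no snake lemma and no separate bookkeeping of the torsion of $A_G$), while the paper's mod-$\pi$ computation stays entirely within residue-field linear algebra, the setting in which its invariants are defined. Similarly, your justification that $X$ is $R$-free (a finitely generated submodule of the free $R$-module $L_G$ over a discrete valuation ring) is more direct than the paper's deduction of ${}_{\pi}X=0$ from the snake sequence; both are valid.
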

Interestingly, the latter   implies that
the cohomological type of a finite  $RG$-module $A$ is determined by the minimal number of its defining relations  (encoded in $r_R(M)$). More precisely:
\begin{corollary}\label{CT modules are characterized by their relations } For a finite $RG$-module $A$, 
	the following two statements are equivalent:
	\begin{itemize}
		\item[(i)]  $A$ is  CT.
		\item[(ii)] For all $RG$-modules $L$ and $M$  as in Theorem \ref{Number of relations for the ZpG-module A}, we have
		$r_R(M)=r_R(L)$.
	\end{itemize} 
\end{corollary}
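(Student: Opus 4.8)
The plan is to read everything off Theorem~\ref{Number of relations for the ZpG-module A}. First I would observe that since $A$ is finite, so is $A_G=A/[A,G]$, hence $A_G$ is $R$-torsion and $d_K(A_G)=\dim_K(A_G\otimes_R K)=0$. Feeding this into the identity of Theorem~\ref{Number of relations for the ZpG-module A}, for any exact sequence $M\rightarrowtail L\twoheadrightarrow A$ with $L$ free of finite rank we get $r_R(M)=r_R(L)+d_R(H_1(G,A))$, the correction term being independent of the chosen presentation. Hence statement (ii) is equivalent to $d_R(H_1(G,A))=0$, and, $H_1(G,A)$ being a finitely generated $R$-module, Nakayama's lemma turns this into $H_1(G,A)=0$. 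So the corollary reduces to the assertion that, for finite $A$, one has $A$ is CT $\iff H_1(G,A)=0$.

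Now (i)$\Rightarrow$(ii) is immediate: if $A$ is CT then $H_1(G,A)=\hat H^{-2}(G,A)=0$. For the converse, assume $H_1(G,A)=0$, set $r=r_R(A)$, and fix a minimal free presentation $M\rightarrowtail L\twoheadrightarrow A$ with $L=(RG)^r$; then $r_R(L)=r$, and the displayed formula above gives $r_R(M)=r$ as well. Thus there is a surjection $L'\twoheadrightarrow M$ with $L'=(RG)^r$; writing $K$ for its kernel, the rank identity $d_R(K)=d_R(L')-d_K(M)$ holds, while $d_K(M)=d_K(L)-d_K(A)=d_K(L)=r|G|=d_R(L')$ because $A$ is finite. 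Therefore $d_R(K)=0$, so $K=0$ by Nakayama and $M\cong (RG)^r$ is free over $RG$. Finally I would restrict $M\rightarrowtail L\twoheadrightarrow A$ to an arbitrary subgroup $S\leq G$: both $M$ and $L$ are free, hence CT, over $RS$, so the long exact Tate cohomology sequence yields $\hat H^n(S,A)\cong\hat H^{n+1}(S,M)=0$ for all $n\in\Z$; that is, $A$ is CT.

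The proof is short, so the only real work is bookkeeping: one must keep straight which modules in play are $R$-torsion (namely $A$, hence $A_G$) and which are $R$-free (namely $M$ and $K$), so that the torsion-free ranks $d_K$ behave as stated --- equivalently, so that $M$ is recognised as a full $RG$-lattice inside the free $KG$-module $L\otimes_R K$. The one substantive ingredient is Theorem~\ref{Number of relations for the ZpG-module A} itself: it is through that formula that the vanishing of $H_1(G,A)$ forces the relation module $M$ to be free, after which cohomological triviality of $A$ propagates automatically by dimension shifting.
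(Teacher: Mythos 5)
Your proof is correct, but it takes a genuinely different route from the paper for the substantive direction. The paper disposes of the corollary in one line: by Theorem \ref{Number of relations for the ZpG-module A} (with $d_K(A_G)=0$ since $A$ is finite), condition (ii) is equivalent to $H_1(G,A)=\hat H^{-2}(G,A)=0$, and then the Gasch\"utz--Uchida theorem (Theorem \ref{Gaschutz-Uchida}) says that for a finite module vanishing of Tate cohomology in a single degree already forces cohomological triviality. You perform the same first reduction, but instead of citing Gasch\"utz--Uchida you prove the needed special case directly: from $r_R(M)=r_R(L)$ you extract a surjection $(RG)^r\twoheadrightarrow M$, and the rank identity $d_R(\ker)=d_R((RG)^r)-d_K(M)=r|G|-r|G|=0$ together with Nakayama shows the kernel vanishes, so the relation module $M$ is $RG$-free; cohomological triviality of $A$ over every subgroup then follows by dimension shifting, since free $RG$-modules restrict to free (hence CT) $RS$-modules. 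This is a correct and self-contained counting argument; it buys independence from Theorem \ref{Gaschutz-Uchida} (whose proof the paper only sketches via the five-term sequence) and gives the extra structural fact that $H_1(G,A)=0$ alone forces the relation module of a finite $A$ to be free, a conclusion the paper only obtains for CT modules via Proposition \ref{Presentations of CT modules are free}; the price is length, and in effect you re-prove the degree $-2$ case of Gasch\"utz--Uchida over $RG$. One cosmetic point: you use the letter $K$ both for the $p$-adic field (in $d_K$) and for the kernel of $(RG)^r\twoheadrightarrow M$; rename the kernel to avoid the clash.
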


Note that, for every free $RG$-module $L$ of finite rank, several asymptotic properties of the finite quotients of $L$ that are CT are reflected in the sequence $n\mapsto c_n(L)$, where  $c_n(L)$ denotes the number of free submodules of $L$ of index $q^n$, with $q=\mathrm{card}(\overline{K})$. We may then encode the previous sequence  in the (formal) Dirichlet series
$$
Z_{\text{coh}}(L;s)= \sum_{n\geq 0}c_nq^{-ns}=\sum_{M \text{ free}}\vert L:M\vert^{-s}
$$
The above is a particular case of the  zeta functions   treated in Solomon \cite{Solomon}. The latter shows in particular---by using combinatorial methods---that  $Z_{\text{coh}}(L;s)$
 is a rational function in $q$.   Another proof of this, based on Tate's thesis,    can be found in Bushnell and Reiner \cite{Bushnell and Reiner}.  As pointed out in \cite{YasMar1},  the above series can be viewed as a $p$-adic integral (à la Igusa \cite{Iguza}), and the basic theory of such integrals shows that the former is indeed a rational function in $q$.  The advantage of working with $p$-adic integral lies in their deep connections with algebraic geometry.  The latter 
  provides  a way for 'reducing' modulo $\pi$, which  allows us to work with varieties over finite fields and their  zeta functions to obtain more refined results, e.g., functional equation, uniformity  (cf. Igusa \cite[Chap. 9]{Iguza} and Voll \cite{Voll}).  It seems, however, that when dealing with
$Z(L;s)$ we   encounter  the worst case in which the reduction is bad in the sense of Denef (Igusa, \textit{loc. cit}).

The rest of the paper is divided into three sections.  In Section 2, we discuss the basic facts concerning the invariants (\ref{Basic ranks over Z_p}), and  we prove Theorem \ref{Number of relations for the ZpG-module A}.  The results in Section 3 are preliminaries for the proof of Theorem \ref{Theorem A}; except some slight generalizations, these are basically known in the literature, but we supply  the core of the proofs (whenever possible) to help the reader in following the sequel.
Note that Corollary \ref*{CT modules are characterized by their relations } follows immediately from Theorem  \ref{Number of relations for the ZpG-module A} and  Theorem \ref{Gaschutz-Uchida} (due to Gasch\"{u}tz and Uchida). The last section is completely devoted to the proof of Theorem \ref{Theorem A}.

 \section{Generators and presentations of $RG$-modules}\label{Presentations of OG modules}
  As is well known, for every field (or more generally a division ring) $k$ of characteristic $p$, the  group ring $kG$ is local   whose  maximal  ideal is equal to the augmentation ideal  (see e.g., \cite[Chap. IX, \S 1]{SerreLF}). This result can be easily  extended  to group rings of the form $SG$, where $S$ is a local ring  with residual division ring of characteristic $p$. 
 
 \begin{lemma}\label{The base ring is local}
 	If  $S$ is a local ring with maximal ideal $\mathfrak{q}$ such that
 	$S/\mathfrak{q}$ has characteristic $p$, then $SG$ is a local ring whose  maximal ideal is given by $\mathfrak{m}=\mathfrak{I}+\mathfrak{q}\cdot 1$ (where $\mathfrak{I}$ denotes the augmentation ideal of $SG$), and we have $SG/\mathfrak{m}\cong S/\mathfrak{q}$.   	
 \end{lemma}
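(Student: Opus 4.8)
The plan is to show that $\mathfrak{m}=\mathfrak{I}+\mathfrak{q}\cdot 1$ is a two-sided ideal with $SG/\mathfrak{m}\cong S/\mathfrak{q}$ a division ring, and that every element outside $\mathfrak{m}$ is a unit; these two facts together force $SG$ to be local with maximal ideal $\mathfrak{m}$. First I would observe that the augmentation map $\varepsilon\colon SG\to S$ composed with the quotient $S\to S/\mathfrak{q}$ is a surjective ring homomorphism whose kernel is exactly $\mathfrak{I}+\mathfrak{q}\cdot 1$ (an element $\sum a_g g$ maps to $\sum a_g \bmod \mathfrak{q}$, which vanishes iff $\sum a_g\in\mathfrak{q}$, iff the element lies in $\mathfrak{I}+\mathfrak{q}\cdot 1$). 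Hence $\mathfrak{m}$ is a two-sided ideal and $SG/\mathfrak{m}\cong S/\mathfrak{q}$, which is a division ring; in particular $\mathfrak{m}$ is a maximal (left and right) ideal.

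The heart of the argument is to show $\mathfrak{m}$ is contained in the Jacobson radical, equivalently that $1+x$ is a unit for every $x\in\mathfrak{m}$; then $\mathfrak{m}\subseteq J(SG)$, but $\mathfrak{m}$ is maximal, so $\mathfrak{m}=J(SG)$ and $SG$ is local. I would split $x = y + c\cdot 1$ with $y\in\mathfrak{I}$ and $c\in\mathfrak{q}$. The key structural input is that the augmentation ideal $\mathfrak{I}$ of $SG$ is \emph{nilpotent}: since $G$ is a finite $p$-group, a standard filtration/induction argument (using that for $G=\langle g\rangle$ of order $p$ one has $(g-1)^p = g^p - 1 + p\cdot(\cdots) = p\cdot(\cdots)$ lands in $\mathfrak{q}SG$ because $\operatorname{char}(S/\mathfrak{q})=p$ means $p\in\mathfrak{q}$, combined with a central series of $G$) shows $\mathfrak{I}^N\subseteq \mathfrak{q}SG$ for some $N$. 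More precisely, one shows by induction on $|G|$ that the image of $\mathfrak{I}$ in $(S/\mathfrak{q})G$ is nilpotent (this is the classical fact that $kG$ is local for $k$ a division ring of characteristic $p$, cited in the paper), and lifts this to the statement $\mathfrak{I}^N\subseteq\mathfrak{q}\cdot SG$. Granting that, $\mathfrak{m}^M\subseteq\mathfrak{q}\cdot SG$ for suitable $M$ (expand, using $\mathfrak{q}\cdot 1$ central and $\mathfrak{q}SG$ an ideal), and since $\mathfrak{q}=J(S)$ is the radical of the \emph{local Noetherian-ish} ring $S$ with $\bigcap_n \mathfrak{q}^n = 0$ — or more simply, since $R$ is a complete DVR (the case we care about) so $\mathfrak{q}$-adic completeness holds — the element $1+x$ with $x\in\mathfrak{m}$ satisfies: $1+x$ is invertible modulo $\mathfrak{q}SG$ because its image in $(S/\mathfrak{q})G$ is $1+\bar y$ with $\bar y$ nilpotent, hence a unit; then one lifts invertibility from $(S/\mathfrak{q})G$ to $SG$ by the standard successive-approximation / completeness argument ($\mathfrak{q}SG$ lies in the radical of $SG$ once we know units lift, or directly: if $u\in SG$ maps to a unit in $(S/\mathfrak{q})G$, write $uv = 1 - t$ with $t\in\mathfrak{q}SG$ and invert $1-t$ via the geometric series $\sum t^n$, which converges $\mathfrak{q}$-adically).

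The main obstacle I anticipate is the lifting step: concluding that an element of $SG$ whose image in $(S/\mathfrak{q})G$ is a unit is itself a unit. This is immediate when $S$ is $\mathfrak{q}$-adically complete (e.g.\ $S=R$ a $p$-adic ring, our case), via the geometric series; for a general local $S$ it requires knowing $\mathfrak{q}SG\subseteq J(SG)$, which is a Nakayama-type statement and should be phrased carefully (or one simply restricts to the complete case, which is all the paper uses). A secondary point requiring care is the induction establishing nilpotence of the augmentation ideal over the residue division ring: one takes a central subgroup $Z$ of order $p$, notes $SG \twoheadrightarrow S(G/Z)$ with kernel generated (as an ideal) by $\mathfrak{I}_Z := (z-1)SG$ for a generator $z$ of $Z$, uses $(z-1)^p\in pSG\subseteq\mathfrak{q}SG$, and combines the nilpotence of the augmentation ideal of $S(G/Z)$ (inductive hypothesis) with the nilpotence of $\mathfrak{I}_Z$ modulo $\mathfrak{q}SG$ to get nilpotence of $\mathfrak{I}$ modulo $\mathfrak{q}SG$. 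Once these two ingredients are in place, local-ness and the identification $SG/\mathfrak{m}\cong S/\mathfrak{q}$ follow formally.
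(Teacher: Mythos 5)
Your proposal is correct, and its skeleton coincides with the paper's: reduce modulo $\mathfrak{q}$ to the group ring $kG$ over the residue division ring $k=S/\mathfrak{q}$, use that the augmentation ideal of $kG$ is its Jacobson radical, and lift back to $SG$. The execution differs in two places. First, you re-derive the nilpotence of the augmentation ideal of $kG$ by induction on $|G|$ through a central series, whereas the paper simply cites the classical fact that $kG$ is local with radical the augmentation ideal (Serre, \emph{Corps locaux}); your version is more self-contained but longer. Second, and more substantively, your lifting step goes through units: you show $1+x$ is invertible for $x\in\mathfrak{m}$, and you correctly flag that for a general (non-complete) local $S$ this needs the containment $\mathfrak{q}\cdot SG\subseteq J(SG)$. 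That containment does hold without any completeness hypothesis, precisely because $SG$ is a finitely generated $S$-module: a simple $SG$-module is cyclic over $SG$, hence finitely generated over $S$, so Nakayama forces $\mathfrak{q}$ to annihilate it. So your argument closes in the stated generality; the geometric-series route is only a shortcut valid in the complete case. The paper sidesteps the unit computation by arguing on maximal left ideals directly: a second maximal left ideal $\mathfrak{l}$ would satisfy $\bar{\mathfrak{l}}+\bar{\mathfrak{m}}=kG$, hence $\bar{\mathfrak{l}}=kG$ by Nakayama in $kG$ (as $\bar{\mathfrak{m}}=J(kG)$), i.e.\ $\mathfrak{l}+\mathfrak{q}\cdot SG=SG$, and a second application of Nakayama to $SG$ as an $S$-module gives the contradiction $\mathfrak{l}=SG$. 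The two lifts are the same Nakayama argument in different clothing; the paper's is shorter, yours makes the role of nilpotence of $\bar{\mathfrak{I}}$ explicit.
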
 
We record a proof for the reader's convenience.
 \begin{proof}
 	Set $k=S/\mathfrak{q}$; we have a ring morphism $\bar{}: SG\to  kG$ defined  by reducing the coefficients  modulo $\mathfrak{q}$.   Define 
 	$\mathfrak{m}$ to be the inverse image
of
 the  augmentation ideal of $kG$ by the previous morphism.  It follows immediately that  $\mathfrak{m}=\mathfrak{I}+\mathfrak{q}\cdot 1$ and  that $SG/\mathfrak{m}\cong kG/\bar{\mathfrak{m}}\cong k$; in particular,  $\mathfrak{m}$ is a  maximal (left) ideal of $SG$. If  $SG$ has another  maximal left ideal  $\mathfrak{l}$, then  $\bar{\mathfrak{m}}+\bar{\mathfrak{l}}=kG$; but, since $\bar{\mathfrak{m}}$  is equal to  the Jacobson radical of $kG$, we see by Nakayama's lemma  that $\bar{\mathfrak{l}}=kG$, that is,
 	$\mathfrak{l}=SG+\mathfrak{q}\cdot SG$.  Applying Nakayama's lemma once again (to $SG$ viewed as an $S$-module), we obtain the contradiction
 	$\mathfrak{l}=SG$, and  the lemma follows.
 \end{proof}

 For $S=R$ (the ring of integers of the $p$-adic field $K$),   the maximal ideal of  $RG$ is given by 	$ \mathfrak{m}=\mathfrak{I}+(\pi)\cdot 1$. Hence, for every $RG$-module $A$ we have
 $$\mathfrak{m}A=[A,G]+\pi A.$$
 Obviously, we may view $A/\mathfrak{m}A$ as a $\overline{K}$-vector space, and then consider its dimension, which gives the invariant $r_R(A)$  already defined in (\ref{Basic ranks over Z_p}).
   
 \begin{lemma}\label{Lemma elementary rank}
If $A$ is a finitely generated $RG$-module,  then every 
minimal generating set of $A$ over $RG$ has size  $r_{R}(A)$.  Furthermore, we have $r_{R}(A)=d_{R}(A_G)$.
 \end{lemma}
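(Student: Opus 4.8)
The plan is to work over the local ring $RG$ --- which is local by Lemma \ref{The base ring is local}, with maximal ideal $\mathfrak{m}$ equal to its Jacobson radical and residue field $\overline{K}=RG/\mathfrak{m}$ --- and to reduce the statement to elementary linear algebra over $\overline{K}$ via Nakayama's lemma. First I would record the standard lifting principle: for a finitely generated $RG$-module $A$ and a subset $X\subseteq A$, the set $X$ generates $A$ over $RG$ if and only if the image $\overline{X}$ of $X$ in $A/\mathfrak{m}A$ spans $A/\mathfrak{m}A$ as a $\overline{K}$-vector space. The forward implication is immediate; for the converse, if $\overline{X}$ spans $A/\mathfrak{m}A$ then $A=RG\cdot X+\mathfrak{m}A$, so the finitely generated left $RG$-module $A/(RG\cdot X)$ equals $\mathfrak{m}\cdot(A/(RG\cdot X))$ and hence vanishes by Nakayama's lemma (valid verbatim for finitely generated left modules over a ring with Jacobson radical $\mathfrak{m}$), giving $A=RG\cdot X$.

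Next I would use this to pin down the size of a minimal (i.e.\ irredundant) generating set. Since $A$ is finitely generated over $R$, hence over $RG$, the space $A/\mathfrak{m}A$ is finite-dimensional; put $r=r_R(A)=\dim_{\overline{K}}A/\mathfrak{m}A$. Lifting a $\overline{K}$-basis of $A/\mathfrak{m}A$ to elements $x_1,\dots,x_r\in A$ produces, by the lifting principle, a generating set of $A$, and it is irredundant because the image of any proper subset fails to span $A/\mathfrak{m}A$; thus irredundant generating sets exist and one of them has size $r$. Conversely, let $X$ be any irredundant generating set of $A$. Then $\overline{X}$ spans $A/\mathfrak{m}A$; if the natural map $X\to\overline{X}$ were not a bijection onto a basis --- either because it is not injective or because $\overline{X}$ is linearly dependent --- some proper subset $X'\subsetneq X$ would already have image spanning $A/\mathfrak{m}A$ and hence would generate $A$, contradicting irredundancy. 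Therefore $X\to\overline{X}$ is a bijection onto a basis and $|X|=r=r_R(A)$.

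Finally, for the identity $r_R(A)=d_R(A_G)$, recall from the discussion following Lemma \ref{The base ring is local} that $\mathfrak{m}A=[A,G]+\pi A$. Since $A_G=A/[A,G]$, multiplication by $\pi$ on $A_G$ has image $(\pi A+[A,G])/[A,G]$, so
\[ A_G/\pi A_G \;=\; A/(\pi A+[A,G]) \;=\; A/\mathfrak{m}A, \]
and therefore $d_R(A_G)=\dim_{\overline{K}}A_G/\pi A_G=\dim_{\overline{K}}A/\mathfrak{m}A=r_R(A)$. There is no real obstacle here: the only point demanding a little care is invoking Nakayama's lemma in the (possibly) noncommutative setting, which is legitimate precisely because Lemma \ref{The base ring is local} identifies $\mathfrak{m}$ with the Jacobson radical of $RG$; everything else is bookkeeping and linear algebra over $\overline{K}$.
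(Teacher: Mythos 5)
Your proof is correct and follows essentially the same route as the paper: Nakayama's lemma over the local ring $RG$ (Lemma \ref{The base ring is local}) for the first assertion, and the identification $A_G/\pi A_G\cong A/\mathfrak{m}A$ coming from $\mathfrak{m}A=[A,G]+\pi A$ for the second. You have simply spelled out the details (the lifting principle and the fact that irredundant generating sets biject onto bases of $A/\mathfrak{m}A$) that the paper leaves implicit.
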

\begin{proof}
The first statement follows by 
Nakayama's lemma; the second follows by   observing that $\pi A_G=\mathfrak{m}A/[A,G]$ (which shows that $A_G/\pi A_G\cong A/ \mathfrak{m}A$).
\end{proof}

\begin{proof}[Proof of Theorem \ref{Number of relations for the ZpG-module A}]
We have $H_1(G,L)=0$ (since $L$ is free over  $RG$);   the  long exact sequence of homology associated  to $M\rightarrowtail L\twoheadrightarrow A$ implies, therefore, the following:
	$$
	\xymatrix{ 0 \ar[r] & H_1(G,A) \ar[r] & M_G \ar[r] & L_G
		\ar[r] &	A_G\ar[r] & 0}.
	$$
	By identifying $H_1(G,A)$ with an $R$-submodule $N$ of $M_G$, and setting $M'=M_G/N$, the above reduces to
	$$E:
	\xymatrix{ 0 \ar[r] &  M'\ar[r] & L_G
		\ar[r] &	A_G\ar[r] & 0}.
	$$
Observe that	multiplication by $\pi$  induces a morphism $\pi:E\to E$;  using the  ker-coker lemma  we obtain the exact sequence
	$$
	\xymatrix{ 0 \ar[r] & _{\pi}M'  \ar[r] &  _{\pi}(L_G) \ar[r]    & _{\pi}(A_G)  \ar[ld]  & \\
		&	& M'/\pi M'  \ar[r]  & L_G/\pi L_G  \ar[r] & A_G/\pi A_G  \ar[r] &0\, 
	}
	$$ 
The augmentation map $RG\twoheadrightarrow R$ allows us to  identify $L_G$ with  $R^{r}$, where $r=d_{R}(L_G)$. This shows in particular that  $L_G$ is  torsion-free over $R$, that is, $_{\pi}(L_G)=0$.  It follows by the  latter exact sequence that $_{\pi}M'=0$ and that the following sequence is exact:
	\begin{equation}\label{Exact sequence (s)}
	\xymatrix{ 
		0\ar[r] 	& _{\pi}(A_G)  \ar[r] 	& M'/\pi M'  \ar[r]  & L_G/\pi L_G  \ar[r] & A_G/\pi A_G  \ar[r] &0  }
	\end{equation}
	From $_{\pi}M'=0$ we obtain that $M'=M_G/N$ is a torsion-free $R$-module, so $M'$ is  free over $R$. Subsequently we have $M_G\cong N\oplus M'$ and $d_{R}(M_G)=d_{R}(N)+d_{R}(M')$, or, equivalently,  (cf. Lemma \ref{Lemma elementary rank})
	 $$r_{R}(M)=d_{R}(H_1(G,A))+d_{R}(M').$$ 
	 	On the other hand, the  exact sequence (\ref{Exact sequence (s)}) implies  that 
	$$d_{R}(M')=\dim_{\overline{K}}   (_{\pi}(A_G)) +d_{R}(L_G)-d_{R}(A_G).$$
	Thus
	$$ r_{R}(M)=d_{R}(L_G)-d_{R}(A_G)+\dim_{\overline{K}} (_{\pi}(A_G))+d_{R}(H_1(G,A)).$$
	To conclude the proof,  observe that $\dim_{\overline{K}} (_{\pi}(A_G))$ is equal to the $R$-rank of the torsion part of $A_G$, and hence $d_{R}(A_G)-\dim_{\overline{K}} (_{\pi}(A_G))=d_K(A_G)$ (observe also that we have $d_{R}(L_G)=r_{R}(L)$ by Lemma \ref{Lemma elementary rank}).
\end{proof}

\section{Criteria for a $G$-module to be CT}
We start by giving a slight generalization of Nakayama \cite[Prop. 9]{Nakayama2}; cf.    Serre \cite[Lemma 5, Chap. IX]{SerreLF}.  The argument we shall give  is 	essentially the same as that given  in Serre, \textit{loc. cit.}
\begin{lemma}\label{Modules annihilated by pi are free over residual fields}
Let $A$ be an $RG$-module (not necessarily finitely generated) such that $\pi A=0$.  Then the following conditions are equivalent:
\begin{itemize}
	\item[(i)] $\hat{H}^{n}(G,A)=0$ for some integer $n$.
	\item[(ii)] $A$ is a free $\overline{K}G$-module.
	\item[(iii)] $A$ is CT over $G$. 
\end{itemize}
\end{lemma}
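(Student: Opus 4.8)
The plan is to prove the cycle of implications (ii)$\Rightarrow$(iii)$\Rightarrow$(i)$\Rightarrow$(ii), since the first two are essentially formal and the real content is the last one. For (ii)$\Rightarrow$(iii): if $A$ is free over $\overline{K}G$, then for every subgroup $S\leq G$ it is also free over $\overline{K}S$ (since $\overline{K}G$ is free as a $\overline{K}S$-module, a free $\overline{K}G$-module is free over $\overline{K}S$), and a free module over a group ring $\overline{K}S$ is cohomologically trivial because $\widehat H^n(S,\overline{K}S)=0$ for all $n$ (the group ring is both induced and coinduced, being finite-dimensional), and Tate cohomology commutes with direct sums in this finitely-generated-over-a-Noetherian setting — or one reduces to the case of a single free generator. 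Hence $\widehat H^n(S,A)=0$ for all $S$ and all $n$, i.e. $A$ is CT. The implication (iii)$\Rightarrow$(i) is trivial.

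The substantive step is (i)$\Rightarrow$(ii): from the vanishing of a \emph{single} Tate cohomology group $\widehat H^n(G,A)$ we must deduce that $A$ is $\overline{K}G$-free. First I would reduce to a statement about $\overline{K}G$: since $\pi A=0$, $A$ is naturally a module over $\overline{K}G$, and $\widehat H^\ast(G,A)$ computed over $\Z G$ agrees with the Tate cohomology computed over $\overline{K}G$ (a complete resolution of $\Z$ over $\Z G$ reduces mod $\pi$ to a complete resolution of $\overline{K}$ over $\overline{K}G$ because $\Z G$ is $\Z$-free; alternatively invoke dimension shifting directly). So it suffices to show: a $\overline{K}G$-module $A$ with $\widehat H^n(G,A)=0$ for some $n$ is free. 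The engine here is that $\overline{K}G$ is a local (in fact, since $G$ is a $p$-group and $\mathrm{char}\,\overline{K}=p$, a quasi-Frobenius/self-injective) ring whose only indecomposable projective is $\overline{K}G$ itself, so "free" $=$ "projective" $=$ "injective" for finitely generated modules; and over such a ring one shows projectivity via the vanishing of a cohomology group by a dimension-shift argument. Concretely: choose a short exact sequence $0\to B\to F\to A\to 0$ with $F$ free over $\overline{K}G$; dimension shifting gives $\widehat H^{n-1}(G,B)\cong \widehat H^n(G,A)=0$, and iterating one transports the vanishing to $\widehat H^0$ or $\widehat H^{-1}$ (using that over $\overline{K}G$, the syzygies are again finitely generated and Tate cohomology is periodic in the relevant bookkeeping), at which point $\widehat H^{-1}(G,A)=H_1(G,A)$-type information forces $A$ to have no free-module-obstruction; the cleanest route is to reduce to $\widehat H^0(G,A)=\widehat H^0(G,B')=0$ for a suitable syzygy $B'$, conclude $B'$ is free by Gasch\"utz–Uchida (Theorem \ref{Gaschutz-Uchida}), and then climb back up using that extensions of free $\overline{K}G$-modules by free $\overline{K}G$-modules split (self-injectivity of $\overline{K}G$).

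The main obstacle I anticipate is the bookkeeping in the dimension-shift: I need that for a \emph{finitely generated} $\overline{K}G$-module $A$, vanishing of $\widehat H^n(G,A)$ for one $n$ propagates to all $n$, which rests on $\overline{K}G$ being self-injective so that both "$\Omega$" (cokernel of an injective hull) and "$\Omega^{-1}$" (kernel of a projective cover) are available and mutually inverse on the stable category — this lets me move the single vanishing index to wherever it is most convenient. For modules that are not finitely generated (the lemma explicitly allows this), I would either split off the finitely generated case and use that $\overline{K}G$ is Noetherian and that an arbitrary $A$ with $\pi A = 0$ is a filtered colimit of finitely generated submodules together with the fact that over an Artinian self-injective ring flatness implies projectivity implies freeness, or — more in the spirit of Serre's argument referenced in the text — directly pick a lift of a $\overline{K}$-basis of $A/\mathfrak I A$ to elements of $A$ and show, using the hypothesis, that the induced map from a free $\overline{K}G$-module is an isomorphism by checking it mod $\mathfrak I$ (Nakayama, via Lemma \ref{The base ring is local}) and checking injectivity on the "norm part" where the single cohomology vanishing is used. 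I expect that second, hands-on approach is actually what the authors intend, so I would write the proof that way: lift generators, get a surjection $\overline{K}G^{\,d}\twoheadrightarrow A$, show the kernel $B$ satisfies $\widehat H^{n}$-vanishing by a shift, and identify $B$ as free to conclude the sequence splits and (counting ranks / using that $A$ and $\overline{K}G^{\,d}$ have the same image mod $\mathfrak I$) that $B=0$.
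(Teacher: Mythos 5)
Your overall skeleton matches the paper's intent: only (i)$\Rightarrow$(ii) needs an argument, and the Serre-style route of lifting a $\overline{K}$-basis of $A/\bar{\mathfrak{I}}A$ is exactly what the paper does. But the decisive step is not actually carried out in your proposal, and as written your preferred version is circular. In your final plan you produce $B\rightarrowtail \overline{K}G^{\,d}\twoheadrightarrow A$, shift the hypothesis to get vanishing of one Tate group of $B$, and then ``identify $B$ as free'' --- but that identification is precisely the implication (i)$\Rightarrow$(ii) you are trying to prove. The substitutes you offer do not close the circle: Theorem \ref{Gaschutz-Uchida} applies only to \emph{finite} modules and yields cohomological triviality, not freeness (and passing from CT to free for modules killed by $\pi$ is again the lemma at hand; Proposition \ref{Presentations of CT modules are free} is proved \emph{from} this lemma, so it cannot be invoked either); the filtered-colimit reduction fails because vanishing of $\hat{H}^{n}(G,A)$ does not pass to the finitely generated submodules of $A$; and ``counting ranks'' --- as well as writing $\overline{K}G^{\,d}$ with $d$ finite and invoking Nakayama via Lemma \ref{The base ring is local} --- silently assumes $A$ finitely generated, whereas the lemma explicitly allows arbitrary $A$.

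What actually makes the argument work, and what your sketch never pins down, is this: dimension-shift the hypothesis so that $H_1(G,A)=0$; lift a $\overline{K}$-basis of $A_G=A/\bar{\mathfrak{I}}A$ to a family $\{a_i\}\subset A$; the fact that the $a_i$ generate $A$ comes not from Nakayama over the local ring $RG$ (which needs finite generation) but from the nilpotence of $\bar{\mathfrak{I}}$ in $\overline{K}G$, which gives the implication $\bar{\mathfrak{I}}B=B\Rightarrow B=0$ for arbitrary modules; then, with $M\rightarrowtail L\twoheadrightarrow A$ the induced presentation by a free $\overline{K}G$-module, the homology sequence $0=H_1(G,A)\to M_G\to L_G\to A_G\to 0$ together with the fact that $L_G\to A_G$ is an isomorphism forces $M_G=0$, and the same nilpotence argument gives $M=0$, so $L\cong A$. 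No splitting, no freeness of the kernel, and no rank count are needed; the single cohomology vanishing enters only through $H_1(G,A)=0$ in that exact sequence. Your phrase ``checking injectivity on the norm part'' gestures at this mechanism but never assembles it, and until it is assembled the proof of (i)$\Rightarrow$(ii) --- and with it the handling of the non-finitely-generated case --- is missing. The remaining implications, as you say, are formal and unproblematic.
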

\begin{proof}
Clearly, we need only show that (i) implies (ii). Suppose (i) is true; 
by dimension shifting we may take $n=-1$,  so we have $H_1(G,A)=0$.  Pick a family $\{a_i\}$ in $A$  in such a way that  $\{\overline{a_i}\}$ forms a $\overline{K}$-basis of $A_G=A/\bar{\mathfrak{I}}A$ (where $\bar{\mathfrak{I}}$ is the augmentation ideal  of $\overline{K}G$).  Since  $\bar{\mathfrak{I}}$ is nilpotent, we have for every $\overline{K}G$-module $B$: 
\begin{equation}\label{Analogue of Nakayama}
\bar{\mathfrak{I}}B=B \implies B=0.
\end{equation}
(In other words, we have an analogue of Nakayama's lemma that works without the assumption that $B$ is finitely generated.) 
Applying the latter for $B=A/A'$, where $A'$ is  the submodule generated by $\{a_i\}$, we obtain $A=A'$, that is, $\{a_i\}$ generates $A$.  Now, if $L$ is a free $\overline{K}G$-module on a family of indeterminates $\{x_i\}$, the map $x_i\mapsto a_i$ induces an epimorphism $s:L \to A$; so we have an exact sequence $M\rightarrowtail L\stackrel{s}{\twoheadrightarrow} A$, where $M=\ker s$. Passing to homology we obtain the exact sequence
$$\xymatrix{ 0= H_1(G,A) \ar[r] & M_G \ar[r] & L_G	\ar[r]^{\bar{s}} &	A_G\ar[r] & 0}.
$$ 
Our assumption on $\{\overline{a_i}\}$ ensures that  $\bar{s}$ is an isomorphism, so we have $M_G=0$, and hence  $M=0$ by   (\ref{Analogue of Nakayama}).   It follows that $L\stackrel{s}{\to} A$ is an isomorphism, which shows that  $A$ is free over $\overline{K}G$, as desired.
\end{proof}

The above result can be used in a more general setting as follows.  If $M$ is an $RG$-module which is torsion-free over $R$, then we have the obvious exact sequence
\begin{equation}\label{Short exact sequence of multiplication by pi}
\xymatrix{    M  \,\ar@{>->}[r]^{\pi} & M
	\ar@{->>}[r] & M/\pi M}.
\end{equation}
Applying $\hat{H}^{*}(G,-)$ to the latter, we find  that if $\hat{H}^{*}(G,M)$ vanishes in two consecutive dimensions, say $n-1$ and $n$, then $\hat{H}^{n}(G,M/\pi M)=0$, and hence
$M/\pi M$ is  free over $\overline{K}G$ by the previous lemma.  Note also that
if the latter holds, then   $M/\pi M$ is free over  $\overline{K}S$ for every $S\leq G$; so when applying
 $\hat{H}^{*}(S,-)$ to (\ref{Short exact sequence of multiplication by pi}) we find that $\hat{H}^{n}(S,M)\stackrel{\pi^*}{\to} \hat{H}^{n}(S,M)$ is an isomorphism for all $n$; but, since multiplication by $\pi$ is nilpotent, it follows that   $\hat{H}^{n}(S,M)=0$ for all $n$, which shows that  $M$ is CT over $G$. To sum up, we have:
\begin{rem}\label{Twin integers in the torsion-free case}
For every $RG$-module 	$M$ which is torsion-free over $R$, the following conditions are equivalent:
\begin{itemize}
	\item[(i)] $\hat{H}^{n}(G,M)$ for two consecutive integers $n$.
	\item[(ii)] $M/\pi M$ is a free $\overline{K}G$-module.
	\item[(iii)] $M$ is CT over $G$. 
\end{itemize}

\end{rem}

The following result is a slight generalization of  Rim \cite[Prop. 3.6]{Rim 1959} (cf. Serre \cite[Thm. 5, Chap. IX]{SerreLF}).
\begin{proposition}\label{Presentations of CT modules are free}
If $A$ is an $RG$-module which is  $R$-projective, then we have 
	$$A \mbox{ is CT }\iff  A \mbox{ is  free over } RG.$$
\end{proposition}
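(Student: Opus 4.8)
The plan is to prove the two implications separately. The implication ``free $\Rightarrow$ CT'' is essentially formal: it suffices to check that $RG$ itself is CT over $G$, since for the finite group $S$ the Tate complex can be taken finitely generated in each degree, so $\hat{H}^{n}(S,-)$ commutes with arbitrary direct sums, and a free $RG$-module restricts, for every $S\leq G$, to a free $RS$-module. Now $RG$, regarded as a $\mathbb{Z}G$-module with $G$ acting only through the group-ring factor, is the induced module $\mathrm{Ind}_{1}^{G}(R)$ (equivalently the coinduced module, $G$ being finite); more generally, for $S\leq G$ it is a free $RS$-module, i.e.\ a direct sum of copies of $\mathrm{Ind}_{1}^{S}(R)$. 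Hence $\hat{H}^{n}(S,RG)=0$ for all $n$ and all $S\leq G$ by the standard vanishing of Tate cohomology on (co)induced modules, and therefore every free $RG$-module is CT.

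For ``CT $\Rightarrow$ free'', first note that since $R$ is a principal ideal domain, $R$-projective is the same as $R$-free; in particular $A$ is $R$-torsion-free, hence $R$-flat. Because $A$ is CT we have $\hat{H}^{n}(G,A)=0$ in (all, hence two consecutive) degrees, so Remark \ref{Twin integers in the torsion-free case} applies and $A/\pi A$ is a free $\overline{K}G$-module. Choose a family $\{a_{i}\}_{i\in I}$ in $A$ whose images form a $\overline{K}G$-basis of $A/\pi A$, let $L$ be the free $RG$-module on indeterminates $\{x_{i}\}_{i\in I}$, and let $s\colon L\to A$ be the morphism $x_{i}\mapsto a_{i}$. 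By construction the induced map $L/\pi L\to A/\pi A$ is an isomorphism of $\overline{K}G$-modules, so $s$ is surjective by Nakayama's lemma (in the finitely generated case; see below for the general case). Put $M=\ker s$, so that $M\rightarrowtail L\twoheadrightarrow A$ is exact; tensoring with $\overline{K}=R/(\pi)$ over $R$ and using that $A$ is $R$-flat keeps the sequence exact, giving a short exact sequence $M/\pi M\rightarrowtail L/\pi L\twoheadrightarrow A/\pi A$. Since the right-hand map is an isomorphism we get $M/\pi M=0$, i.e.\ $M=\pi M=\pi^{n}M$ for every $n$. But $M$, being a submodule of the $R$-free module $L$ over the PID $R$, is itself $R$-free, and $\bigcap_{n}\pi^{n}M=0$ in an $R$-free module; hence $M=0$ and $s\colon L\xrightarrow{\ \sim\ }A$ is an isomorphism, so $A$ is free over $RG$.

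The only genuinely delicate point is the surjectivity of $s$ when $A$ is not assumed finitely generated; the rest of the forward direction is bookkeeping with Remark \ref{Twin integers in the torsion-free case} and the flatness of $A$ over $R$, while the backward direction is standard homological algebra. I expect to dispose of this point either by restricting attention to the finitely generated case---which is all that is needed for Theorem \ref{Theorem A}---or, in full generality, by appealing to the topological Nakayama lemma for the complete local ring $RG$, together with the fact that $A$ is separated in the $\mathfrak{m}$-adic (equivalently $\pi$-adic) topology since it is $R$-free.
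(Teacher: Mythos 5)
Your forward direction (free $\Rightarrow$ CT) is the standard induced-module argument; the paper simply omits it. For the substantive implication (CT $\Rightarrow$ free) you take a genuinely different route from the paper. You use Remark \ref{Twin integers in the torsion-free case} to get $A/\pi A$ free over $\overline{K}G$, lift a basis, and kill the kernel of the resulting surjection $L\to A$ by reducing mod $\pi$ (using $R$-flatness of $A$) and invoking $\bigcap_n\pi^nM=0$. The paper instead keeps an arbitrary presentation $M\rightarrowtail L\twoheadrightarrow A$ and shows that $X=\mathrm{Hom}_R(A,M)$ is CT (because $X/\pi X\cong\mathrm{Hom}_{\overline{K}}(A/\pi A,M/\pi M)$ is coinduced), so that $H^1(G,X)=0$ forces the identity of $A$ to lift to $L$; then $A$ is $RG$-projective, hence free since $RG$ is local. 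In the finitely generated case your argument is correct and arguably more direct, and it is all that Theorem \ref{Theorem A} needs.

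The one real issue is exactly the one you flag: the Proposition as stated carries no finiteness hypothesis, and your proof of surjectivity of $s$ does not go through in general. Knowing that $L/\pi L\to A/\pi A$ is onto only gives $A=s(L)+\pi A$, i.e.\ that $A/s(L)$ is $\pi$-divisible, and a quotient of a free $R$-module of infinite rank can be nonzero and divisible (e.g.\ $K$ is a quotient of $R^{(\mathbb{N})}$). Your proposed repair via ``topological Nakayama plus separatedness'' does not work as stated: that lemma needs $\pi$-adic \emph{completeness} of $A$, and a free $R$-module of infinite rank is separated but not complete. This is precisely the difficulty the paper's $\mathrm{Hom}_R(A,M)$ device is designed to sidestep: it never needs to produce generators of $A$, only a splitting of a given presentation, and Kaplansky's theorem (projective over a local ring $\Rightarrow$ free) handles the non-finitely-generated case at the end. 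So either add the hypothesis that $A$ is finitely generated (sufficient for this paper), or replace the lifting-of-generators step by the splitting argument.
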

\begin{proof}
We need only prove the direct implication ($\Rightarrow$).  Suppose  $A$ is CT and choose a presentation $M\rightarrowtail L\stackrel{s}{\twoheadrightarrow} A$ of $A$ over $RG$, with $L$ free. Since $A$ is $R$-projective, the functor $\mathrm{Hom}_{R}(A,-)$ is exact; applying this to the preceding presentation and passing  to (ordinary) cohomology, we obtain the exact sequence
	$$
\xymatrix{ \mathrm{Hom}_{RG}(A,L)\ar[r]^{s_{*}} & \mathrm{Hom}_{RG}(A,A)
	\ar[r] & H^1(G, X)	}
$$
where $X=\mathrm{Hom}_{R}(A,M)$.  

We claim that $H^1(G, X)=0$.  By applying $\mathrm{Hom}_{R}(A,-)$ to  $M\stackrel{\pi}{\rightarrowtail}M\twoheadrightarrow M/\pi M$, we obtain the exact sequence
$$
\xymatrix{ 0 \ar[r] & X \ar[r]^{\pi}& X \ar[r]& \mathrm{Hom}_{R}(A/\pi A,M/pM)
	\ar[r] & 0},
$$
hence we may identify   $X/\pi X$  to  $X'=\mathrm{Hom}_{\overline{K}}(A/\pi A,M/pM)$.   We know by 
Remark
\ref{Twin integers in the torsion-free case} that $A/\pi A$ is a free $\overline{K}G$-module,  so $X'$ is coinduced and hence CT.   This shows that $X/\pi X$ is CT and our claim follows by another application of  Remark
\ref{Twin integers in the torsion-free case}.  

Now it follows that 	$
 \mathrm{Hom}_{RG}(A,L) \stackrel{s_{*}}{\to} \mathrm{Hom}_{RG}(A,A)
$ is surjective; so there exists  $u\in \mathrm{Hom}_{G}(A,L)$  such that  $s\circ u=1_A$ and hence $A$ is a direct factor of $L$. This proves that  $A$ is  projective over $RG$, so it is free because  $RG$ is local (cf. Lemma \ref{The base ring is local}.
\end{proof}

Suppose now that $A$ is an arbitrary $RG$-module.  Recall that for every normal subgroup $N\lhd G$ we have the inflation-restriction exact sequence (see e.g., \cite[Prop. 4, Chap. VII ]{SerreLF}):
\begin{equation}\label{Inflation-restriction exact sequence}
\xymatrix{ 0 \ar[r] & \hat{H}^1(G/N,A^N) 	\ar[r]^{\text{Inf}} & \hat{H}^1(G,A)
	\ar[r]^{\text{Res}} & \hat{H}^1(N,A)}.
\end{equation}
The latter can be pushed to a higher dimension $n>1$ provided that $\hat{H}^i(N,A)$ is zero for  $i=1,\ldots, n-1$ (cf. \cite[Prop. 5, Chap. VII]{SerreLF}).

\begin{lemma}\label{Serre's on inflation map}
If $A$ is an arbitrary $RG$-module and  $N$ is a normal subgroup of $G$ such that $\hat{H}^i(N,A)$ is zero for  $i=1,\ldots, n-1$, then the following natural sequence is exact:
$$
\xymatrix{ 0 \ar[r] & \hat{H}^n(G/N,A^N) 	\ar[r]^{\text{Inf}} & \hat{H}^n(G,A)
	\ar[r]^{\text{Res}} & \hat{H}^n(N,A)}.
$$
\end{lemma}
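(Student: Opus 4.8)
The plan is to argue by induction on $n\geq 1$, reducing everything to the case $n=1$ --- which is the inflation--restriction sequence (\ref{Inflation-restriction exact sequence}) recalled above --- by means of dimension shifting. So fix $n\geq 2$ and assume the lemma is already proved for $n-1$, for every $G$-module satisfying the corresponding vanishing hypothesis.

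The first step is to pass to a dimension shift of $A$ adapted to the triple $(G,N,A)$. Embed $A$ in its coinduced module $A^{*}=\mathrm{Hom}_{\mathbb{Z}}(\mathbb{Z}G,A)$ via $a\mapsto(g\mapsto ga)$, giving a short exact sequence of $G$-modules
$$\xymatrix{ 0 \ar[r] & A \ar[r] & A^{*} \ar[r] & B \ar[r] & 0 }.$$
Three features of $A^{*}$ make this work. Being (co)induced from the trivial subgroup, $A^{*}$ is CT over every subgroup of $G$, so $\hat{H}^{i}(G,A^{*})=\hat{H}^{i}(N,A^{*})=0$ for all $i$. Its $N$-invariants are $(A^{*})^{N}=\mathrm{Hom}_{\mathbb{Z}}(\mathbb{Z}[G/N],A)$, a module coinduced from the trivial subgroup of $G/N$, so $\hat{H}^{i}(G/N,(A^{*})^{N})=0$ for all $i$. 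Finally, since $n\geq 2$ the hypothesis yields $H^{1}(N,A)=\hat{H}^{1}(N,A)=0$, so forming $N$-invariants preserves exactness and
$$\xymatrix{ 0 \ar[r] & A^{N} \ar[r] & (A^{*})^{N} \ar[r] & B^{N} \ar[r] & 0 }$$
is a short exact sequence of $G/N$-modules.

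The second step is to apply the inductive hypothesis to $B$ and translate the conclusion back up. The long exact sequence of $\hat{H}^{*}(N,-)$ attached to $0\to A\to A^{*}\to B\to 0$, together with $\hat{H}^{*}(N,A^{*})=0$, produces isomorphisms $\hat{H}^{i}(N,B)\cong\hat{H}^{i+1}(N,A)$; in particular $\hat{H}^{i}(N,B)=0$ for $i=1,\dots,n-2$, which is exactly what the lemma requires of $B$ at level $n-1$. Thus
$$\xymatrix{ 0 \ar[r] & \hat{H}^{n-1}(G/N,B^{N}) \ar[r]^{\text{Inf}} & \hat{H}^{n-1}(G,B) \ar[r]^{\text{Res}} & \hat{H}^{n-1}(N,B) }$$
is exact. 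One then transports it along the three connecting isomorphisms $\hat{H}^{n-1}(G,B)\cong\hat{H}^{n}(G,A)$ and $\hat{H}^{n-1}(N,B)\cong\hat{H}^{n}(N,A)$ (from the long exact sequences of $\hat{H}^{*}(G,-)$ and $\hat{H}^{*}(N,-)$ on $0\to A\to A^{*}\to B\to 0$) and $\hat{H}^{n-1}(G/N,B^{N})\cong\hat{H}^{n}(G/N,A^{N})$ (from the long exact sequence of $\hat{H}^{*}(G/N,-)$ on the second sequence above, using $\hat{H}^{*}(G/N,(A^{*})^{N})=0$), to recover the asserted exact sequence for $A$ at level $n$.

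The crux of the argument, and the only place where care is genuinely needed, is the naturality bookkeeping in this last transport: one must check that the three connecting isomorphisms intertwine the inflation and restriction maps, so that the exact triangle for $B$ in degree $n-1$ really becomes the one for $A$ in degree $n$. Compatibility with restriction is routine, since restriction commutes both with connecting homomorphisms and with the maps induced by morphisms of coefficient modules. Compatibility with inflation is the delicate point, because $\text{Inf}$ mixes the change of groups $G/N\to G$ with the inclusion of $N$-invariants $C^{N}\hookrightarrow C$; I would settle it by viewing the inclusions $A^{N}\hookrightarrow A$, $(A^{*})^{N}\hookrightarrow A^{*}$, $B^{N}\hookrightarrow B$ as a morphism from the second displayed short exact sequence (of $G/N$-modules) to the first (of $G$-modules), and then combining the naturality of inflation with the naturality of the Tate connecting homomorphism in the short exact sequence. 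This is in essence Serre's argument, \textit{loc. cit.}; beyond this diagram chase no new ingredient is required.
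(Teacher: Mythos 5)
Your proof is correct and is essentially the argument the paper delegates to Serre (\emph{Corps Locaux}, Chap.~VII, Prop.~5): induction on $n$ via dimension shifting along $0\to A\to \mathrm{Hom}_{\mathbb{Z}}(\mathbb{Z}G,A)\to B\to 0$, using that the coinduced module and its $N$-invariants are cohomologically trivial over $G$, $N$ and $G/N$ respectively, and that $H^1(N,A)=0$ keeps the invariant sequence exact. You also correctly identify and handle the one genuinely delicate point, the compatibility of inflation and restriction with the connecting isomorphisms, so nothing is missing.
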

(A proof can be found in \cite{SerreLF}, \textit{loc. cit.})

\medskip

Note that Remark
\ref{Twin integers in the torsion-free case} can be used to deal with an arbitrary $RG$-module $A$ by considering any presentation $M\rightarrowtail L\twoheadrightarrow A$, with $L$ free over $RG$ (subsequently $M$ is $R$-free), and then applying $\hat{H}^*(G,-)$ to see that $\hat{H}^n(G,A)\cong \hat{H}^{n+1}(G,M)$ for all integers $n\geq 1$.  This yields the following well-known result of Nakayama \cite[Theorem 1]{Nakayama1} (cf. \cite[Thm. 5, Chap. IV]{SerreLF}).

\begin{theorem}\textnormal{(Nakayama.)}\label{Nakayama twin integers}
	Let $A$ be an arbitrary $RG$-module (or just $G$-module). Then $A$ is CT over $G$ if and only if $\hat{H}^n(G,A)=0$ for two consecutive integers $n$.
\end{theorem}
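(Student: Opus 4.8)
\emph{Strategy.} The forward implication is immediate from the definition of CT (all Tate cohomology groups vanish, so in particular two consecutive ones do), so the whole content is in the converse. The plan is to reduce, by one dimension shift, to the $R$-torsion-free situation already handled by Remark~\ref{Twin integers in the torsion-free case}. The key observation is that that Remark does more than it might look: from the vanishing of $\hat{H}^{n}$ in two consecutive degrees \emph{for $G$ itself}, it already concludes that the module is CT \emph{over all subgroups}. So once we can pass from an arbitrary $A$ to a torsion-free syzygy $M$ with control of the cohomology, we are done.

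\emph{Execution.} Suppose $\hat{H}^{n}(G,A)=0$ for two consecutive integers $n$. Choose a presentation $M\rightarrowtail L\twoheadrightarrow A$ over $RG$ with $L$ free (of possibly infinite rank); since $L$ is $R$-free and $R$ is a principal ideal domain, the submodule $M$ is $R$-torsion-free. Next, every free $RG$-module is CT over $G$: restricted to any $S\le G$ it is a direct sum of copies of $RS=\mathrm{Ind}_{1}^{S}R=\mathrm{Coind}_{1}^{S}R$, and for the finite group $S$ a complete resolution can be taken to consist of finitely generated free modules, so Tate cohomology commutes with arbitrary direct sums; hence $\hat{H}^{n}(S,L)=0$ for all $n$ and all $S$. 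Feeding $M\rightarrowtail L\twoheadrightarrow A$ into the long exact Tate cohomology sequence of each $S\le G$ and using this vanishing yields natural isomorphisms
$$\hat{H}^{n}(S,A)\;\cong\;\hat{H}^{n+1}(S,M)\qquad\text{for all }S\le G\text{ and all }n\in\Z .$$
In particular $\hat{H}^{n}(G,M)=0$ for two consecutive integers $n$; since $M$ is $R$-torsion-free, Remark~\ref{Twin integers in the torsion-free case} gives that $M$ is CT over $G$, i.e.\ $\hat{H}^{n}(S,M)=0$ for every $S\le G$ and every $n$. Transporting this back through the displayed isomorphisms, $\hat{H}^{n}(S,A)=0$ for all $S\le G$ and all $n$, so $A$ is CT.

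\emph{Where the difficulty sits.} The honest content is concentrated in Remark~\ref{Twin integers in the torsion-free case} (which in turn rests on Lemma~\ref{Modules annihilated by pi are free over residual fields}, via $\overline{K}G$-freeness of $M/\pi M$); the present theorem is just the routine passage to a first syzygy. Two points deserve care. First, the dimension shift must be legitimate without any finite-generation hypothesis on $A$, which is precisely why one isolates the statement that free $RG$-modules of \emph{arbitrary} rank are CT; alternatively one may dimension-shift with ordinary cohomology $H^{*}$ (valid in degrees $\ge 1$) and, if the two given consecutive degrees are not positive, first shift $A$ upward a few times, each shift producing an $R$-torsion-free syzygy so that the hypothesis of the Remark is preserved. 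Second, for the ``$G$-module'' (i.e.\ $\Z G$-module) version of the statement one replaces the uniformizer $\pi$ by the prime $p$ throughout: for a $p$-group $G$ the groups $\hat{H}^{n}(S,M)$ are annihilated by $|S|$, a power of $p$, so multiplication by $p$ is nilpotent on them, and $M/pM$ is a module over the characteristic-$p$ field $\mathbb{F}_p$, so Lemma~\ref{Modules annihilated by pi are free over residual fields} applies verbatim with $\overline{K}$ replaced by $\mathbb{F}_p$.
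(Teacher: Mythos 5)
Your proof is correct and follows essentially the same route as the paper: pass to a free presentation $M\rightarrowtail L\twoheadrightarrow A$, note that $M$ is $R$-torsion-free, dimension-shift $\hat{H}^n(\cdot,A)\cong\hat{H}^{n+1}(\cdot,M)$, and invoke Remark \ref{Twin integers in the torsion-free case}. Your added care about free modules of arbitrary rank being CT and about the $\mathbb{Z}G$ case (replacing $\pi$ by $p$) only makes explicit details the paper leaves implicit.
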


Less known in the literature is the following result, established independently by Gasch\"{u}tz \cite{Gasch1965} and Uchida \cite{Uchida}, which improves Nakayama's theorem 
for $A$ finite.

\begin{theorem}\textnormal{ (Gasch\"{u}tz-Uchida)}\label{Gaschutz-Uchida}
	Let $A$ be a finite $G$-module.  Then $A$ is CT over $G$ if and only if $\hat{H}^n(G,A)=0$ for some integer $n$.
\end{theorem}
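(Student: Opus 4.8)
The implication ``$A$ CT $\Rightarrow$ $\hat{H}^n(G,A)=0$ for some $n$'' is immediate, so the plan is to prove the converse. First I would reduce to the case that $A$ is a finite $\mathbb{Z}_pG$-module: since $\hat{H}^{*}(G,-)$ is annihilated by $|G|$, a power of $p$, both these groups and the property of being CT depend only on the $p$-primary part of $A$; taking $R=\mathbb{Z}_p$ and $\pi=p$ one may then invoke the machinery of the preceding sections. By Nakayama's Theorem \ref{Nakayama twin integers} it suffices to upgrade the single vanishing $\hat{H}^n(G,A)=0$ to vanishing in two consecutive degrees, after which $A$ is CT. I would carry this out by induction on $|G|$. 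When $G$ is cyclic, $\hat{H}^{*}(G,-)$ is $2$-periodic, and for finite $A$ the exact sequences $0\to A^G\to A\xrightarrow{g-1}(g-1)A\to 0$ and $0\to {}_{N}A\to A\xrightarrow{N}NA\to 0$ give at once $|\hat{H}^{0}(G,A)|=|\hat{H}^{-1}(G,A)|$ (triviality of the Herbrand quotient of a finite module over a cyclic group); hence vanishing in one degree forces vanishing in the adjacent one, and by periodicity $A$ is CT.

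For the inductive step choose $Z\leq Z(G)$ with $|Z|=p$, so that $|G/Z|<|G|$. The crucial point is that \emph{once $A$ is known to be CT over $Z$}, Lemma \ref{Serre's on inflation map} applies in every degree (all $\hat{H}^{i}(Z,A)$ vanish) and yields natural isomorphisms $\mathrm{Inf}\colon \hat{H}^{m}(G/Z,A^Z)\xrightarrow{\sim}\hat{H}^{m}(G,A)$ for all $m$. In particular $\hat{H}^{n}(G/Z,A^Z)=0$; as $A^Z$ is a finite $G/Z$-module and $|G/Z|<|G|$, the induction hypothesis shows $A^Z$ is CT over $G/Z$, and then $\hat{H}^{m}(G,A)\cong\hat{H}^{m}(G/Z,A^Z)=0$ for all $m$, so $A$ is CT over $G$. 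Thus the whole proof is reduced to a single point.

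The main obstacle is to establish that $A$ is CT over the central cyclic group $Z$ --- equivalently, by the base case, that $\hat{H}^{i}(Z,A)=0$ for a \emph{single} integer $i$ --- knowing only that $\hat{H}^{n}(G,A)=0$. This is painless when $\pi A=0$: then Lemma \ref{Modules annihilated by pi are free over residual fields} makes $A$ free over $\overline{K}G$, hence free over $\overline{K}Z$, and so CT over $Z$. For general finite $A$ one is tempted to filter $A$ by submodules killed by $\pi$, but the argument stalls because the vanishing of $\hat{H}^{n}(G,A)$ need not descend to the subquotients; closing this gap is precisely the content of the Gasch\"{u}tz--Uchida theorem. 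I expect the resolution to require a secondary induction on $|A|$ combined with dimension shifting by (finite) induced and coinduced modules and a careful bookkeeping of the long exact sequences attached to $0\to\pi A\to A\to A/\pi A\to 0$ and to multiplication by $\pi$ on $A$ --- equivalently, an analysis of $A$ as a module over the quasi-Frobenius ring $(\mathbb{Z}/p^{m})G$ (where $p^{m}A=0$), under which ``CT over $G$'' corresponds to projectivity and Proposition \ref{Presentations of CT modules are free} handles the lattices that appear en route.
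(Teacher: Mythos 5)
Your skeleton (reduction to $p$-primary $A$, induction on $|G|$ via a central subgroup $Z$ of order $p$, the Herbrand-quotient argument for the cyclic base case, and the passage through Lemma \ref{Serre's on inflation map} plus Theorem \ref{Nakayama twin integers} at the end) is sound, but, as you yourself concede, the proof is not complete: you never establish the pivotal claim that $A$ is CT over $Z$, and your ordering of the induction makes this unobtainable. Lemma \ref{Serre's on inflation map} needs the vanishing of $\hat{H}^i(Z,A)$ in low degrees as an \emph{input}, so you cannot use it to transfer the single hypothesis $\hat{H}^n(G,A)=0$ down to $Z$ or over to $G/Z$; and the hypothesis does not restrict to subgroups by itself. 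The speculation at the end (secondary induction on $|A|$, quasi-Frobenius rings) is not needed and does not close the gap.

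The missing ingredient is the five-term exact sequence, which is exactly how the paper indicates the theorem is proved, and it requires running your induction in the opposite order. First dimension-shift (using the finite induced/coinduced modules of $A$, which preserve finiteness and shift $\hat{H}^*$ over every subgroup) so that the hypothesis reads $H^1(G,A)=0$. The five-term sequence
$$0 \to H^1(G/Z,A^Z)\xrightarrow{\;\mathrm{Inf}\;} H^1(G,A)\xrightarrow{\;\mathrm{Res}\;} H^1(Z,A)^{G/Z}\xrightarrow{\;\mathrm{tg}\;} H^2(G/Z,A^Z)\xrightarrow{\;\mathrm{Inf}\;} H^2(G,A)$$
gives, with no hypothesis on $Z$, that $H^1(G/Z,A^Z)=0$; the induction hypothesis applied to the finite $G/Z$-module $A^Z$ then makes $A^Z$ CT over $G/Z$, so in particular $H^2(G/Z,A^Z)=0$, and the transgression term forces $H^1(Z,A)^{G/Z}=0$. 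Since $H^1(Z,A)$ is a finite $p$-primary module acted on by the $p$-group $G/Z$, vanishing of the fixed points forces $H^1(Z,A)=0$, and your cyclic base case now yields that $A$ is CT over $Z$. Only at this point do your final steps become available: Lemma \ref{Serre's on inflation map} gives $\hat{H}^m(G,A)\cong\hat{H}^m(G/Z,A^Z)=0$ for all $m\geq 1$, and Theorem \ref{Nakayama twin integers} concludes that $A$ is CT over $G$. In short: the information about $G/Z$ must be extracted \emph{before} anything is known about $Z$ (inflation in degree $1$ is injective unconditionally), and it is the transgression map --- absent from your argument --- that then controls $H^1(Z,A)$.
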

(It is worth noting that the above follows easily by induction on the order of $G$ from 
 the \textit{five-term exact sequence} (a completion of \ref{Inflation-restriction exact sequence} by two other terms); for the latter see, e.g., Weibel \cite[\S 6.8, p. 196]{Weibel}.)

\section{A CT module splits over its torsion part}\label{The decomposition as torsion + torsion free} 
This section is devoted to proving Theorem \ref{Theorem A}.  Throughout we assume that $A$ is a  finitely generated $RG$-module and that it is  CT in addition.  Recall that $T=T(A)$ denotes the torsion part of $A$ over $R$. We need some auxiliary results.

\begin{lemma}\label{Enough to get T CT}  Let $A$ be a  finitely generated  $RG$-module which is CT. Then the following statements are equivalent:
	\begin{itemize}
		\item[(i)] $T$ is CT.
		\item[(ii)] $A/T$ is CT.
		\item[(iii)] $A= T\oplus A/T$ over $RG$.
	\end{itemize}

\end{lemma}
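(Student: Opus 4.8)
The plan is to play the short exact sequence of $RG$-modules (equivalently of $\mathbb{Z}G$-modules) $T\rightarrowtail A\twoheadrightarrow A/T$ off against the two structural facts at our disposal: that $A/T$ is finitely generated and torsion-free over the principal ideal domain $R$, hence $R$-free and in particular $R$-projective, and that $A$ itself is CT.

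First I would prove $(i)\Leftrightarrow(ii)$ by a purely cohomological argument. For each subgroup $S\leq G$ the short exact sequence gives a long exact sequence of Tate cohomology groups $\cdots\to\hat{H}^n(S,T)\to\hat{H}^n(S,A)\to\hat{H}^n(S,A/T)\to\hat{H}^{n+1}(S,T)\to\cdots$. Since $A$ is CT, the middle terms vanish for all $n$, so the connecting maps yield isomorphisms $\hat{H}^n(S,A/T)\cong\hat{H}^{n+1}(S,T)$ for every $n$ and every $S\leq G$. Consequently $\hat{H}^{\ast}(S,T)$ vanishes identically if and only if $\hat{H}^{\ast}(S,A/T)$ does, which is exactly the equivalence of $(i)$ and $(ii)$.

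Next, for $(ii)\Rightarrow(iii)$, assume $A/T$ is CT. Being $R$-projective and CT, $A/T$ is free over $RG$ by Proposition \ref{Presentations of CT modules are free}; in particular it is $RG$-projective, so the epimorphism $A\twoheadrightarrow A/T$ splits over $RG$ and we obtain $A=T\oplus A/T$ over $RG$. Finally $(iii)\Rightarrow(i)$ is immediate: a direct sum decomposition over $RG$ induces, for every $S\leq G$ and every $n$, a decomposition $\hat{H}^n(S,A)\cong\hat{H}^n(S,T)\oplus\hat{H}^n(S,A/T)$, and since the left-hand side vanishes so does each summand; in particular $T$ is CT. Together with the equivalence $(i)\Leftrightarrow(ii)$ this closes the loop and proves the lemma.

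I do not expect a genuine obstacle here; this lemma is essentially a packaging statement. The one place where real content enters is the step $(ii)\Rightarrow(iii)$, which rests on Rim's theorem (Proposition \ref{Presentations of CT modules are free}) to promote ``$R$-projective and CT'' to ``$RG$-free'', and where one must be a little careful to note that finite generation of $A$ is precisely what guarantees $R$-projectivity of $A/T$. The remaining burden---actually verifying that $T$ (equivalently $A/T$) is CT---is not addressed by this lemma and is left for the subsequent arguments.
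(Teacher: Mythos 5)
Your proposal is correct and follows essentially the same route as the paper: the dimension-shifting isomorphisms $\hat{H}^n(\,\cdot\,,A/T)\cong\hat{H}^{n+1}(\,\cdot\,,T)$ from the sequence $T\rightarrowtail A\twoheadrightarrow A/T$ give $(i)\Leftrightarrow(ii)$, Proposition \ref{Presentations of CT modules are free} (with $A/T$ being $R$-free by finite generation) gives the splitting for $(ii)\Rightarrow(iii)$, and additivity of Tate cohomology gives $(iii)\Rightarrow(i)$. Your explicit tracking of all subgroups $S\leq G$ is a minor (and welcome) precision over the paper's shorthand, not a different argument.
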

\begin{proof}
By applying  $\hat{H}^*(G,-)$ to the exact sequence
 $T\rightarrowtail A\twoheadrightarrow A/T$ we see immediately that, for all integers $n$,
$$
\hat{H}^n(G,A/T)\cong \hat{H}^{n+1}(G,T).
$$
It follows in particular that (i) is equivalent to (ii).  Suppose now that  (ii) holds.  By  Prop. \ref*{Presentations of CT modules are free},  $A/T$ is a free  $RG$-module, so the obvious $RG$-morphism $A\twoheadrightarrow A/T$ has a section, and  (iii) follows. The implication '(iii) $\Rightarrow$ (i)' is immediate   by the additivity of  $\hat{H}^*(G,-)$.
\end{proof}
 
The next result  reduces the  proof of  Theorem \ref{Theorem A} to the case where $G$  has  order $p$. 

\begin{lemma}\label{Reduction to G of order p}
If Theorem \ref*{Theorem A} holds for $G$ cyclic of order $p$, then it holds every for  finite $p$-group $G$.
\end{lemma}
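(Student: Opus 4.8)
The plan is to argue by induction on $|G|$, removing a normal subgroup $M\lhd G$ of index $p$ (for instance a maximal subgroup, since maximal subgroups of a $p$-group are normal of index $p$) and transporting cohomology among $G$, $M$ and the cyclic quotient $G/M$ by means of the inflation--restriction sequence of Lemma \ref{Serre's on inflation map}. By Lemma \ref{Enough to get T CT} everything reduces to showing that, for a finitely generated CT module $A$, the torsion submodule $T=T(A)$ is again CT over $G$. Since $A$ is finitely generated over the Noetherian ring $R$, the module $T$ is finite, so by Nakayama's theorem (Theorem \ref{Nakayama twin integers}) it will suffice to verify $\hat H^n(G,T)=0$ for two consecutive $n\ge 1$.

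For the base of the induction, note that if $G$ is trivial the conclusion of Theorem \ref{Theorem A} is merely the standard splitting $A=T\oplus A/T$ of a finitely generated module over the principal ideal domain $R$, while if $|G|=p$ it is precisely the hypothesis of the lemma. Assume then that $|G|=p^k$ with $k\ge 2$, that Theorem \ref{Theorem A} is known for all $p$-groups of order $<|G|$, and fix $M\lhd G$ with $[G:M]=p$. Restriction shows at once that $A$ is CT over $M$, so the inductive hypothesis applied to $M$ gives that $T$ is CT over $M$; in particular $\hat H^i(M,A)=\hat H^i(M,T)=0$ for all $i$. Plugging the vanishing of $\hat H^i(M,A)$ into Lemma \ref{Serre's on inflation map} produces embeddings $\hat H^n(G/M,A^M)\hookrightarrow \hat H^n(G,A)=0$ for every $n\ge 1$, so $A^M$ is CT over $G/M$ by Theorem \ref{Nakayama twin integers}. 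Now $A^M$ is finitely generated over $R[G/M]$ and $G/M$ is cyclic of order $p$; hence, by the hypothesis of the lemma, Theorem \ref{Theorem A} applies to $A^M$, and its torsion submodule $T(A^M)$ is CT over $G/M$. Finally, $T(A^M)=A^M\cap T(A)$ coincides with the fixed-point submodule $T^M$, so $T^M$ is CT over $G/M$.

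It remains to assemble these pieces. Since $\hat H^i(M,T)=0$ for all $i$, a second use of Lemma \ref{Serre's on inflation map} gives isomorphisms $\hat H^n(G,T)\cong \hat H^n(G/M,T^M)$ for every $n\ge 1$, and the right-hand side is zero because $T^M$ is CT over $G/M$. Thus $\hat H^n(G,T)=0$ for all $n\ge 1$, so $T$ is CT over $G$ by Nakayama's theorem, and then Lemma \ref{Enough to get T CT} yields the full statement of Theorem \ref{Theorem A} for $G$. This closes the induction.

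The only delicate point is the degree bookkeeping for Lemma \ref{Serre's on inflation map}: to apply it in degree $n$ one needs the relevant Tate groups over $M$ to vanish in degrees $1,\dots,n-1$, which is exactly what cohomological triviality of $A$, respectively of $T$, over $M$ supplies; consequently the isomorphisms $\hat H^n(G,-)\cong\hat H^n(G/M,(-)^M)$ hold simultaneously for all $n\ge 1$, comfortably more than Nakayama's two-consecutive-integers criterion requires. I do not foresee any further obstacle: once Lemma \ref{Enough to get T CT} has shifted the burden onto the torsion part, the proof is a clean inflation--restriction d\'evissage.
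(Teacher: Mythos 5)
Your proof is correct, but the d\'evissage runs in the opposite direction from the paper's. The paper picks a \emph{minimal} (central) normal subgroup $N$ of order $p$, applies the order-$p$ hypothesis to $A$ restricted to $N$ (getting $T$ CT over $N$), applies the induction hypothesis to the quotient $G/N$ acting on $A^N$, and then, having only produced $\hat H^1(G,T)=0$ from the degree-one inflation--restriction sequence, invokes Gasch\"utz--Uchida (Theorem \ref{Gaschutz-Uchida}), which requires $T$ to be finite. You instead take a \emph{maximal} normal subgroup $M$ of index $p$, put the induction hypothesis on the subgroup $M$ (getting $T$ CT over $M$) and the order-$p$ hypothesis on the cyclic quotient $G/M$ acting on $A^M$; since $T$ is CT over $M$, Lemma \ref{Serre's on inflation map} is available in every degree, so you get $\hat H^n(G,T)\cong\hat H^n(G/M,T^M)=0$ for all $n\ge 1$ and can finish with Nakayama's criterion (Theorem \ref{Nakayama twin integers}) alone, without Gasch\"utz--Uchida and without using finiteness of $T$ at this step. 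Your identification $T(A^M)=T^M$ is immediate (torsion elements of $A^M$ are torsion elements of $A$ fixed by $M$), whereas the paper makes the corresponding point $T(A^N)=T^N$ via the exact sequence $0\to T^N\to A^N\to (A/T)^N\to H^1(N,T)=0$; both are fine. The two arguments are of essentially equal strength and length; your version is marginally more self-contained at this point (the paper still needs Gasch\"utz--Uchida elsewhere, for Corollary \ref{CT modules are characterized by their relations }), while the paper's choice of a central $N$ of order $p$ mirrors the way the base case of Theorem \ref{Theorem A} is ultimately handled. One cosmetic remark: the freeness of $A/T$ over $RG$ is not literally part of the statement of Lemma \ref{Enough to get T CT} but comes from Proposition \ref{Presentations of CT modules are free} once $A/T$ is known to be CT, exactly as in the paper's own use of that lemma, so nothing is missing.
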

 \begin{proof}
 We proceed by induction on the order of $G$.  Suppose that $\vert G\vert>p$, and pick a minimal normal subgroup $N\leq G$ (that is, $N$ is a  central subgroup of order $p$).  Since $A$ is CT,  $\hat{H}^n(N,A)$ vanishes for all integers $n$, so  by  Lemma \ref{Serre's on inflation map}  we have, for all $n\geq 1$,
 $$H^n(G/N,A^N)\cong H^n(G,A)=0,$$
Now, by Nakayama's theorem (Theorem \ref{Nakayama twin integers}), $A^N$ is CT over $G/N$,  and so, by the induction hypothesis,  $T(A^N)$ is CT over $G/N$.  On the other hand, as $N$ has order $p$, the assumption in the lemma ensures that 
 $T=T(A)$ is CT over $N$; applying then  $H^*(N,-)$ to 
 $T\rightarrowtail A\twoheadrightarrow A/T$ gives the exact sequence
 $$
 \xymatrix{ 0 \ar[r] &  T^N\ar[r] & A^N
 	\ar[r] & (A/T)^N	\ar[r] & H^1(N,T)=0}.
 $$
It follows that $A^N/T^N$ has no $\pi$-torsion, that is, $T(A^N)=T^N$, and so       $T^N$
 is CT over $G/N$.  Next, by using the inflation-restriction exact sequence 
	$$
	\xymatrix{ 0 \ar[r] & \hat{H}^1(G/N,T^N) 	\ar[r]^{\text{Inf}} & \hat{H}^1(G,T)
	\ar[r]^{\text{Res}} & \hat{H}^1(N,T)},
$$
we obtain $\hat{H}^1(G,T)=0$; it follows by Gasch\"{u}tz-Uchida (Theorem \ref{Gaschutz-Uchida}) that $T$ is CT over $G$, and Lemma \ref{Enough to get T CT} concludes the proof.
 \end{proof}

Now we need to deal with modules $V$ over $\overline{K}G$ for $G=\langle g\rangle$  cyclic of order $p$.  Plainly, every such a  $V$ can be viewed as a module over the polynomial ring $\overline{K}[X]$, where $X$ acts on $V$ as $g-1$ and $X^pV=0$ (which follows  because $(g-1)^p=g^p-1=0$).  The  point here is that  $\overline{K}[X]$ is a principal ideal domain, and so every finitely-generated (equivalently, finite-dimensional over $\overline{K}$) $\overline{K}G$-module $V$ can be uniquely expressed as a direct sum of cyclic modules of the form $V_n=\overline{K}[X]/(X^r)$, with $1\leq  r\leq p$. 

\begin{lemma}\label{Modules over F_p[X]}
Suppose that $G$ is cyclic of order $p$, and let  $V$ be a finite-dimensional  $\overline{K}G$-module.  If $\mathrm{Hom}_{\overline{K}}(V,V)$ is free over $\overline{K}G$, then so is $V$.
\end{lemma}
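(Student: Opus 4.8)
The plan is to prove the contrapositive: if $V$ is not free over $\overline{K}G$, then $\mathrm{Hom}_{\overline{K}}(V,V)$ is not free either. As recalled just above, $\overline{K}G\cong\overline{K}[X]/(X^{p})$ with $X$ acting as $g-1$, and over this ring every finite-dimensional module is a direct sum of Jordan blocks $J_{r}:=\overline{K}[X]/(X^{r})$, $1\le r\le p$, where $J_{p}\cong\overline{K}G$ is the unique free indecomposable; hence a finite-dimensional $\overline{K}G$-module is free precisely when it is a direct sum of copies of $J_{p}$. So I would begin, under the assumption that $V$ is not free, by fixing an indecomposable direct summand $J_{r}$ of $V$ with $1\le r\le p-1$, writing $V\cong J_{r}\oplus W$.

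Next I would invoke the standard isomorphism of $\overline{K}G$-modules $\mathrm{Hom}_{\overline{K}}(V,V)\cong V\otimes_{\overline{K}}V^{*}$, with the diagonal action on the right and the conjugation action $(g\phi)(w)=g\phi(g^{-1}w)$ on the left; its verification is a short direct computation, and the map is bijective since $V$ is finite-dimensional. Because dualizing is a contravariant exact self-equivalence of finite-dimensional $\overline{K}G$-modules it preserves both dimension and indecomposability, and since $J_{r}$ is the only indecomposable of dimension $r$ one gets $J_{r}^{*}\cong J_{r}$; hence $V^{*}\cong J_{r}\oplus W^{*}$, and
\[
\mathrm{Hom}_{\overline{K}}(V,V)\cong(J_{r}\oplus W)\otimes_{\overline{K}}(J_{r}\oplus W^{*})\cong(J_{r}\otimes_{\overline{K}}J_{r})\oplus(\text{other summands}),
\]
so $J_{r}\otimes_{\overline{K}}J_{r}$, with its diagonal $G$-action, is a direct summand of $\mathrm{Hom}_{\overline{K}}(V,V)$.

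To finish I would apply a divisibility obstruction. Since $\overline{K}G$ is local (Lemma \ref{The base ring is local}), a finitely generated projective $\overline{K}G$-module is free, so a direct summand of a free $\overline{K}G$-module is again free; and any free $\overline{K}G$-module has $\overline{K}$-dimension divisible by $|G|=p$. But $\dim_{\overline{K}}(J_{r}\otimes_{\overline{K}}J_{r})=r^{2}$ is not divisible by the prime $p$ when $1\le r\le p-1$, so $J_{r}\otimes_{\overline{K}}J_{r}$ is not free; hence neither is $\mathrm{Hom}_{\overline{K}}(V,V)$, a contradiction. Therefore $V$ is free.

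The argument is short, but it is worth stressing where the content sits, which is also where the main obstacle lies: one cannot argue by a naive global count, since $\dim_{\overline{K}}\mathrm{Hom}_{\overline{K}}(V,V)=(\dim_{\overline{K}}V)^{2}$ may already be divisible by $p$ without $V$ being free (take $V=J_{1}^{\oplus p}$). The crux is to isolate a single non-free indecomposable summand $J_{r}$ of $V$ and, using the self-duality $J_{r}^{*}\cong J_{r}$, to locate the offending summand $J_{r}\otimes_{\overline{K}}J_{r}$ inside $\mathrm{Hom}_{\overline{K}}(V,V)$. (Alternatively, one can avoid tensor products altogether: freeness of $\mathrm{Hom}_{\overline{K}}(V,V)$ makes it CT, so $\hat{H}^{0}(G,\mathrm{Hom}_{\overline{K}}(V,V))=0$; identifying this group with the stable endomorphism ring of $V$ shows that $1_{V}$ factors through a projective, whence $V$ is a retract of a projective and hence free.)
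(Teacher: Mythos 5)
Your proof is correct and follows essentially the same route as the paper's: identify $\mathrm{Hom}_{\overline{K}}(V,V)$ with $V^{*}\otimes_{\overline{K}}V$, use the self-duality of the Jordan blocks, and derive a contradiction from the fact that a summand $J_{r}\otimes_{\overline{K}}J_{r}$ of a free $\overline{K}G$-module must have $\overline{K}$-dimension divisible by $p$ while $r^{2}$ is prime to $p$ for $r<p$; the paper phrases this via Krull--Schmidt applied to all pairs $V_{r_i}\otimes_{\overline{K}}V_{r_j}$, whereas you isolate a single non-free block, a purely cosmetic difference. Your parenthetical alternative via $\hat{H}^{0}(G,\mathrm{Hom}_{\overline{K}}(V,V))=0$ and the stable endomorphism ring is also valid and genuinely different, but it is not the argument you develop.
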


\begin{proof}
Firstly, note that $\mathrm{Hom}_{\overline{K}}(V,V)$ can be identified to $V^*\otimes_{\overline{K}} V$ (clearly, both are viewed as $G$-modules under  the obvious diagonal actions) via the $\overline{K}G$-isomorphism 
$$\alpha \otimes v \stackrel{\varphi}{\longmapsto}
  \varphi_{\alpha \otimes v}:u\mapsto \alpha(u)v,$$
for $\alpha \in  V^*$ (the $\overline{K}$-dual of $V$) and $u,v\in V$.    (Since both of  $\mathrm{Hom}_{\overline{K}}(V,V)$ and  $V^*\otimes_{\overline{K}} V$ have dimension $(\dim_{\overline{K}}V)^2$ over $\overline{K}$, it suffices to check that $\varphi$ is surjective; the latter follows by observing  that every $f \in \mathrm{Hom}_{\overline{K}}(V,V)$ can be expressed as $f=\sum_i \varphi_{v_i^* \otimes f(v_i)}$, where $\{v_i\}$ is a fixed  $\overline{K}$-basis of $V$.) 

Now the assumption in the lemma means that $V^* \otimes_{\overline{K}} V$  is a free  $\overline{K}G$-module; in other words,   all the indecomposable factors of the latter are isomorphic to $KG$.  Let $V=\bigoplus_{i} V_{r_i}$  be the canonical decomposition of $V$ over $\overline{K}G$  (so $ V_{r_i}\cong  \overline{K}[X]/(X^{r_i})$, with $1\leq r_i\leq p$).  Clearly, we have $V^*= \bigoplus_{i} V_{r_i}^*$ and, for every index $i$,  $V_{r_i}^*\cong V_{r_i}$.  It follows that 
$$V^* \otimes_{\overline{K}} V=\bigoplus_{i,j}  V_{r_i} \otimes_{\overline{K}} V_{r_j}.$$
By the uniqueness of the decomposition, every indecomposable direct summand of $V_{r_i} \otimes_{\overline{K}} V_{r_j}$ is isomorphic to $\overline{K}G$ and hence  has dimension $p$ over $\overline{K}$.    It follows that $p$ divides $\dim_{\overline{K}} V_{r_i} \otimes_{\overline{K}} V_{r_j}$, that is, $p$ divides $r_ir_j$ for every pair  of indexes $(i,j)$.   If $V$ wasn't free, then  one of the invariants $r_i$ would be strictly less than $p$, and hence $r_i^2$ is prime to $p$, which contradicts the preceding fact.  The lemma follows. 
\end{proof}

\textit{Remark.}  The study of the invariants (i.e., the dimensions of the indecomposable factors) of  $V_{r} \otimes_{\overline{K}} V_{s} $   is an interesting combinatorial problem. Clearly, this  problem also arises in the more general situation where $G$ is  cyclic  of order $p^n$ (for every $n\geq 1$);  the associated indecomposable modules here are indexed by the  integers $r=1,\ldots,p^n$.   We refer  to Green \cite{Green} and to Glasby et al. \cite{Glasby et al.} (and the references therein) for  further discussions.

\begin{proof}[Proof of Theorem \ref{Theorem A}]
By Lemma \ref{Enough to get T CT}, we need only show that $B=A/T$ is CT, and, by Lemma \ref{Reduction to G of order p}, it suffices to show the latter for $G$  cyclic of order $p$. Since $B$ is torsion-free, we can argue as in the proof of  Prop. \ref{Presentations of CT modules are free} to see that $X=\mathrm{Hom}_{R}(A,B)$ is CT.  Observe that every $R$-morphism $A\to B$ vanishes on $T$ (because $B$ is torsion-free); so   we may identify $X$ to $\mathrm{Hom}_{R}(B,B)$.  The assumption that $A$ is finitely generated implies that $B$ is $R$-free.  Hence, as in the proof of Prop. \ref{Presentations of CT modules are free},  by applying $\mathrm{Hom}_{R}(B,-)$ to 
 $B\stackrel{\pi}{\rightarrowtail} B\twoheadrightarrow B/\pi B$ we obtain the exact sequence
$$
\xymatrix{ 0 \ar[r] &  X\ar[r]^{\pi} & X
	\ar[r] & \mathrm{Hom}_{R}(B,B/\pi B)	\ar[r] & 0},
$$
or equivalently
$$
\xymatrix{ 0 \ar[r] &  X\ar[r]^{\pi} & X
	\ar[r] & \mathrm{Hom}_{\overline{K}}(V,V)	\ar[r] & 0},
$$
where $V=B/\pi B$. Therefore, we may  identify $X/\pi X$ with $\mathrm{Hom}_{\overline{K}}(V,V)$; we obtain by Remark \ref{Twin integers in the torsion-free case} that $\mathrm{Hom}_{\overline{K}}(V,V)$ is a free $\overline{K}G$-module, and so the same conclusion holds for $V$ by Lemma \ref{Modules over F_p[X]}.   It follows then by Remark \ref{Twin integers in the torsion-free case} that $B$ is CT, which concludes the proof. 
\end{proof}

\section*{Acknowledgment}
The first author is grateful to Prof. Ergun Yalcin for his interest and encouragement.

%\bibliography{NAC}
%\bibliographystyle{abbrv}

\end{document}